\pgfplotsset{compat=newest}
\numberwithin{equation}{section}
\theoremstyle{plain}
\newtheorem{theorem}{Theorem}[section]
\newtheorem{lemma}[theorem]{Lemma}
\newtheorem{corollary}[theorem]{Corollary}
\newtheorem{proposition}[theorem]{Proposition}
\newtheorem{question}[theorem]{Question}
\theoremstyle{definition}
\newtheorem{example}[theorem]{Example}
\theoremstyle{remark}
\newtheorem{remark}[theorem]{Remark}
\newcommand{\eb}{\begin{example}}
\newcommand{\ee}{\end{example}}
\newcommand{\eqb}{\begin{equation}}
\newcommand{\eqe}{\end{equation}}
\newcommand{\spb}{\begin{split}}
\newcommand{\spe}{\end{split}}
\newcommand{\cab}{\begin{cases}}
\newcommand{\cae}{\end{cases}}
\newcommand{\thmb}{\begin{theorem}}
\newcommand{\thme}{\end{theorem}}
\newcommand{\qb}{\begin{question}}
\newcommand{\qe}{\end{question}}
\newcommand{\prob}{\begin{proposition}}
\newcommand{\proe}{\end{proposition}}
\newcommand{\leb}{\begin{lemma}}
\newcommand{\lee}{\end{lemma}}
\newcommand{\cob}{\begin{corollary}}
\newcommand{\coe}{\end{corollary}}
\newcommand{\enb}{\begin{enumerate}}
\newcommand{\ene}{\end{enumerate}}
\newcommand{\cenb}{\begin{center}}
\newcommand{\cene}{\end{center}}
\newcommand{\prb}{\begin{proof}}
\newcommand{\pre}{\end{proof}}
\newcommand{\rb}{\begin{remark}}
\newcommand{\re}{\end{remark}}
\newcommand{\tS}{\text{S}}
\newcommand{\ka}{\kappa}
\newcommand{\lt}{\left}
\newcommand{\rt}{\right}
\newcommand{\om}{\omega}
\newcommand{\E}{\mathbb{E}}
\newcommand{\N}{\mathbb{N}}
\newcommand{\bP}{\mathbb{P}}
\newcommand{\R}{\mathbb{R}}
\newcommand{\Z}{\mathbb{Z}}
\newcommand{\cR}{\mathcal{R}}
\newcommand{\cS}{\mathcal{S}}
\newcommand{\cT}{\Omega}
\newcommand{\cX}{\mathcal{Y}}
\newcommand{\cY}{\mathcal{X}}
\newcommand{\rO}{\mathrm{O}}
\newcommand{\wY}{\widetilde{Y}}
\newcommand{\supp}{{\rm supp\,}}
\newcommand{\srcsize}{\@setfontsize{\srcsize}{5pt}{5pt}}
\DeclareRobustCommand\widecheck[1]{{\mathpalette\@widecheck{#1}}}
\def\@widecheck#1#2{%
    \setbox\z@\hbox{\m@th$#1#2$}%
    \setbox\tw@\hbox{\m@th$#1%
       \widehat{%
          \vrule\@width\z@\@height\ht\z@
          \vrule\@height\z@\@width\wd\z@}$}%
    \dp\tw@-\ht\z@
    \@tempdima\ht\z@ \advance\@tempdima2\ht\tw@ \divide\@tempdima\thr@@
    \setbox\tw@\hbox{%
       \raise\@tempdima\hbox{\scalebox{1}[-1]{\lower\@tempdima\box
\tw@}}}%
    {\ooalign{\box\tw@ \cr \box\z@}}}
\begin{document}
\title[Classification of dynamics for 1-d CTMCs]
  {Full classification of dynamics for one-dimensional continuous time Markov chains with polynomial transition rates}
\author{Chuang Xu}
\address{
Faculty of Mathematics,
Technical University of Munich, 85748 Garching bei M\"{u}nchen, Germany.}
\email{Chuang.Xu@ma.tum.de {\rm (corresponding author)}}
\author{Mads Christian Hansen}
\author{Carsten Wiuf}
\address{
Department of Mathematical Sciences,
University of Copenhagen, Copenhagen,
2100, Denmark.}

\subjclass[2010]{}

\date{\today}

\noindent

\begin{abstract}
This paper provides full classification of the dynamics for continuous time Markov chains (CTMCs) on the non-negative integers with polynomial transition rate functions and without   arbitrary large backward  jumps. Such stochastic processes are abundant in applications, in particular in biology. More precisely, for CTMCs of bounded jumps, we provide necessary and sufficient conditions in terms of calculable parameters for explosivity, recurrence vs transience, positive recurrence vs null recurrence, certain absorption, and implosivity. Simple sufficient conditions for exponential ergodicity of stationary distributions  and quasi-stationary distributions as well as existence and non-existence of moments of hitting times are also obtained. Similar simple sufficient conditions for the aforementioned dynamics together with their opposite dynamics are established for CTMCs with unbounded {forward} jumps. The results generalize  criteria for birth-death processes by Karlin and McGregor in the 1960s.
Finally, we apply our results to stochastic reaction networks, an extended class of branching processes, a general bursty single-cell stochastic gene expression model, and population processes, none of which are birth-death processes. The approach is based on a mixture of Lyapunov-Foster type results, the classical semimartingale approach, {as well as estimates of stationary measures.}
\end{abstract}

\keywords{Density-dependent continuous time Markov chains, stochastic reaction networks, explosivity, recurrence, transience, certain absorption, positive and null recurrence, stationary and quasi-stationary distributions}

\maketitle

\section{Introduction}

Continuous time Markov chains (CTMCs) on a countable state space are widely used in applications, for example, in genetics \cite{E79}, epidemiology \cite{PCMV15}, ecology \cite{G83}, biochemistry and systems biology \cite{W06}, sociophysics \cite{WH83}, and queueing theory \cite{GH98}.
For a CTMC on a countable state space, criteria for dynamical properties (explosivity, recurrence, certain absorption, positive recurrence, etc.) are among the fundamental topics and areas of interest.

 {A primary source of inspiration for our work comes from  stochastic reaction network (SRN) theory, where examples are abundant. In the present context, SRNs are CTMC  models of    \emph{(chemical) reaction networks} with polynomial transition rates \cite{AK15} (see Section \ref{sec:RN} for a precise definition).  In particular, we are interested in one-species reaction networks, where the reactions take the form $n\tS\ce{->[\kappa]}m\tS$ for two non-negative integers, $n,m$, and $\ka>0$, a positive  reaction  rate constant.  Here $\tS$ represents a (chemical) species common to all reactions in the network, and the reaction represents the conversion of $n$ molecules of the species $\tS$ into $m$ molecules of the same species. Each reaction has a transition rate, a propensity to `fire'. The transition rate of $n\tS\ce{->[\kappa]}m\tS$ is   $\eta(x)=\kappa x(x-1)\ldots(x-n+1)$, $x\in\N_0$. Whenever  the reaction fires, the corresponding Markov chain on the state space $\N_0$ jumps from the current state $x$ to the state $x+m-n$, the number of $\tS$ molecules after the firing of the reaction. Different reactions may contribute to the same transition in the state space.}

{While the chemical terminology may suggest a restricted usage of such models, this is  by far not so.  In fact, SRNs have widespread use in the sciences by interpreting species as agents, individuals, and similar entities, and reactions as interactions among these \cite{AK15}. One might emphasize the SIR model in epidemiology as a particular example \cite{PCMV15}.}

{Consider the following two examples of one-species SRNs from the recent literature, consisting of seven and five reactions, respectively,
\eqb\label{Eq-1}
\text{S}\ce{<=>[1][2]} 2\text{S}\ce{<=>[4][4]} 3\text{S}\ce{<=>[6][1]} 4\text{S}\ce{->[1]} 5\text{S},\qquad \text{S}\ce{<=>[1][2]}  2\text{S}\ce{<=>[3][1]}  3\text{S}\ce{->[1]}  4\text{S}
\eqe
 \cite{ACKK18}. A  key issue is to understand whether  the graphical representation of the reaction networks   determine  the  dynamics of the corresponding CTMCs, irrespectively, their initial values.}
The first network is {\em explosive} {(except if the initial state is $0$, which forms a singleton communicating class)}, while the second is {\em positive recurrent} {on the positive integers (again $0$ forms a singleton class)} \cite{ACKK18}, which might be inferred from known birth-death process (BDP) criteria  \cite{A91}.
However, these criteria are \emph{not} computationally simple and blind to the {graphical} structure of the networks. A simple  explanation for the drastic difference in the dynamics of these two  random walks on $\N_0$ is desirable but remains unknown  \cite{ACKK18}.

Motivated by the above concern, we provide criteria for dynamical properties of \,CTMCs on $\N_0$ with polynomial-like transition rates {and without the possibility of arbitrary large negative jumps; as in the examples above}. These CTMCs are ubiquitous in applications,
and  approximate  almost all CTMCs due to the polynomials being dense in the space of continuous functions  \cite{B76}.
Specifically, we provide simple threshold criteria for the existence and non-existence of moments of hitting times, positive recurrence and null recurrence, and exponential ergodicity of stationary distributions and quasi-stationary distributions (QSDs)  in terms of  \emph{four} easily computable parameters, {derived from the transition rates}. Additionally, we provide  necessary and sufficient conditions for explosivity, recurrence (\emph{vs} transience), certain absorption, and implosivity. {These conditions provide simple explanations for the dynamical discrepancies of the two SRNs in \eqref{Eq-1}.}

{Our approach is to apply the classical semi-martingale approach used in Lamperti's problem \cite{L60}, as well as Lyapunov-Foster theory \cite{CV17,MP14,MT93} with delicately constructed Lyapunov functions (in particular, we make use of the techniques in \cite{MP14}). } 
The problem of finding neat and desirable necessary and sufficient conditions for  dynamical properties of  CTMCs, 
has existed for a long time \cite{MT93,MT09,CV17}. However, the fact that this has not been accomplished yet, indicates that {it might be a non-trivial task.}  A main contribution of this paper is to \emph{identify} a large class of CTMCs (without a built-in detailed balanced structure) for which computationally simple, sufficient and necessary criteria can be established for dynamical properties of interest. Our criteria save the effort of constructing Lyapunov functions and applying Lyapunov-Foster theory case by case. Also, a case by case approach is ignorant of the underlying {graphical} structure of the Markov chain.

The simple necessary and sufficient conditions for the dynamical properties are determined by calculating up to four parameters, $R$, $\alpha$, $\beta$ and $\gamma$, that are  expressed in terms of the coefficients of the first two terms of the polynomial-like transition rate functions (the specific assumptions are given in Section \ref{sect-preliminary}). For illustration, let $\cT$ be the set of jump sizes,   and
\begin{equation}\label{asymptotic-transition-rate}
\lambda_{\om}(x)=a_{\om} x^{d_{\om}}+b_{\om}x^{d_{\om}-1}+\rO(x^{d_{\om}-2}),\quad \om\in\cT,
\end{equation}
be the transition rate functions,  where $d_\omega$ is the degree of  $\lambda_{\om}$ {and $\rO$ is Landau's symbol.} 
Define  $R=\max_{\om\in\cT}d_{\om}$ and
\begin{equation}\label{formula-alpha-gamma}
  \alpha=\!\!\!\sum_{\om\colon d_{\om}=R}a_{\om}\om,\quad \gamma=\!\!\!\sum_{\om\colon d_{\om}=R}b_{\om}\om+\!\!\!\sum_{\om\colon d_{\om}=R-1}a_{\om}\om, \quad
\beta=\gamma-\frac{1}{2}\sum_{\om\colon d_{\om}=R}a_{\om}\om^2.
\end{equation}
Based on these {four} parameters, a \emph{full} classification of the dynamical properties  can be achieved (see Theorems~\ref{th-7}, \ref{th-8}, \ref{th-9}, and \ref{th-10}) and is summarized in Table~\ref{table-summary} below. {The   parameters $\alpha,\beta,\gamma$ only depend on the coefficients of the monomials of degree $R$ and $R-1$. Furthermore, the parameter $\alpha$  might be interpreted as a   sum  over the jump sizes, weighted by the coefficients of the monomials of degree $R$. Similarly,  $\gamma$ might  be interpreted as a  sum over the jump sizes, weighted by the coefficients of the monomials of degree $R-1$.}

\captionsetup[table]{}
\arrayrulecolor{black}
\begin{table}[ht]
\centering
\setlength{\tabcolsep}{9pt}
\renewcommand{\arraystretch}{1.3}
\begin{tabular}{  |p{1.05cm}|p{.75cm}|p{.95cm}|p{.95cm}|p{1.8cm}|p{.95cm}|p{.95cm}|p{.75cm}| }
\cline{2-8}
\multicolumn{1}{c|}{}&\multirow{2}{*}{$\alpha<0$}&\multicolumn{5}{c|}{$\alpha=0$}&\multirow{2}{*}{$\alpha>0$}\\\hhline{~~-|-|-|-|-|~}
\multicolumn{1}{c|}{}&\multicolumn{1}{c|}{}&$\gamma<0$&$\gamma=0$&$\beta<0<\gamma$&$\beta=0$&$\beta>0$& \multicolumn{1}
{c|}{}\\\hline
$R=0$ &\multicolumn{1}{c|}{\cellcolor{red}} &\multicolumn{1}{c|}{\cellcolor{black}}&\multicolumn{1}{c|}{\cellcolor{blue}}&\multicolumn{2}{c|}{\cellcolor{black}}&\multicolumn{2}{c|}{\cellcolor{green}} \\\hhline{|-|-|-|~|-|-|~~|}
$R=1$ &\multicolumn{1}{c|}{\cellcolor{red}ES}&\cellcolor{red}&\multicolumn{3}{c|}{\cellcolor{blue}NS/NQ}&\multicolumn{2}{c|}{\cellcolor{green}NS/NQ} \\\hhline{|-|-|~|-|-|~|~|-|}
$R=2$ &\cellcolor{pink}&\multicolumn{3}{c|}{\cellcolor{red}}&\multicolumn{1}{c|}{\cellcolor{blue}}&
\multicolumn{1}{c|}{\cellcolor{green}} &\multicolumn{1}{c|}{\cellcolor{yellow}} \\\hhline{|-|~|-|-|-|-|-|~|}
$R>2$ &\multicolumn{5}{c|}{\cellcolor{pink}ES/UQ}&\multicolumn{2}{c|}{\cellcolor{yellow}NS/NQ} \\\hline
\end{tabular}
\vspace{3.5pt}
\caption{\small Parameter regions with different dynamical properties.
Implosive (pink), positive recurrent but non-implosive (red), null recurrent (blue),  transient and non-explosive (green), explosive (yellow), and not possible parameter combinations (black). ES=exponential ergodicity of stationary distribution, UQ=uniform exponential ergodicity of QSD, NQ=no QSDs, NS=no ergodic stationary distributions. The {parameter} regions below $\alpha=0$ assumes $\Omega$ is finite.\label{table-summary}}
\end{table}

To see the power of our results, consider the following SRN, which is not a BDP,
\begin{equation}\label{SRN-2}
0\ce{<=>[\ka_1][\ka_{2}]}m\tS\ce{<=>[\ka_3][\ka_{4}]}(m+1)\tS\ce{->[\ka_5]}(m+3.0)\tS,
\end{equation}
where {$m$ is a positive integer}, and $\kappa_i$, $i=1,\ldots,$, is a positive rate constant. 
Then,  $R=m+1$, $\alpha=2\ka_5-\ka_{4}$, $\beta={\ka_3-m\ka_{2}+\tfrac{m^2+m-1}{2}\kappa_4-(m^2+m+2)\kappa_5}$,  and  $\gamma={\kappa_3-m\kappa_2+\tfrac{m(m+1)}{2}\kappa_4}$ ${-m(m+1)\kappa_5}$. The criteria  established in Section~\ref{sect-criteria} (and collected in Table \ref{table-summary}) implies that the SRN is (in the sense of the underlying irreducible CTMC on $\N_0$) 
\begin{itemize}
\item[(a)] explosive a.s. if and only if (i) $\alpha>0$ or (ii) $\alpha=0$, $\beta>0$, $R>2$, and non-explosive if either (i) or (ii) fails.
\item[(b)] recurrent if and only if (iii) $\alpha<0$ or (iv) $\alpha=0$, $\beta\le0$, and transient if and only if both (iii) and (iv) fail.
\item[(c)] positive recurrent  if and only if (iii), (v) $\alpha=0$, $\beta<0$, or (vi) $\alpha=0$, $\beta=0$, $R>2$ holds, and null recurrent if and only if (vii) $\alpha=0$, $\beta=0$, $R=2$.
\item[(d)] implosive\footnote{Here implosive means positive recurrent with uniformly bounded expected first return time. See Subsection~\ref{subsec-implosive} for the precise definition.} if and only if (iii) or (viii) $\alpha=0$, $\beta\le0$, $R>2$ holds, while it is non-implosive if and only if both (ii) and (viii) fail.
\end{itemize}
The above example shows the applicability and simplicity of our results, {in fact, the computations could easily be implemented in a software program that takes a reaction network as input and outputs the network dynamical properties. Furthermore, the example illustrates the richness of the dynamical properties that might reside within a single example by varying the parameters of the model.} All possibilities for $\alpha, \beta$ and $R$ are covered {(the parameter $\gamma$ is irrelevant for SRNs \cite{WX20}).}
The stability of the chain  only depends on $\alpha$ (which is independent of $m$), unless  $\alpha=0$, in which case the sign of $\beta$ determines the stability. If so, then $\beta=\ka_3-m\ka_2-3\ka_5$, {which depends on $m$. Thus,   if $\ka_3-\ka_2-3\ka_5>0$, then by choosing $m>1$ large enough the stability of the chain flips.  The parameter  $\alpha$ plays a role similar  to that of the largest Lyapunov exponent for $\alpha\not=0$. Analogously, when $\alpha=0$, the parameter $\beta$ determines the stochastic stability  and hence, plays a role similar  to that of  the second largest Lyapunov exponent in the critical case. }

\subsection*{Brief description of our approach}

Although the approach essentially is based on Lyapunov-Foster type results, the sharp criteria for diverse dynamical properties of CTMCs are established by combining a mixture of results \cite{MT93,C97,AI99,MP14,CV17}, {in particular \cite{MP14} provides useful criteria}. 

The most prominent difficulty in deriving necessary and sufficient conditions for dynamical properties of general CTMCs, with  multiple jump sizes, lie in the non-calculability of stationary distributions/measures, as well as the non-existence of  orthogonal polynomials  \cite{KM57}. This also explains why in general, a partial result in terms of a sufficient, but not a necessary condition, by construction of a Lyapunov  function, is likely. Here we  discover that Lyapunov-Foster theory {and the semimartingale approach indeed are} enough to derive necessary and sufficient conditions.
To obtain   conditions that are not only sufficient but also necessary, we check if the negation of a condition is also sufficient for the reverse dynamical property. {Moreover, to show null recurrence, we also rest on estimates for stationary measures in \cite{AI99}.} Finally, we like to point out that some of the Lyapunov functions we use, appear to be rarely used  in the literature.

\subsection*{Comparison with results in the literature}
Complete classification of dynamical properties seems quite rare in the literature. Here, we summarize relevant results together with the methods applied.

Reuter provided necessary and sufficient conditions for explosivity  of CTMCs  (the so-called Reuter's criterion) \cite{R57}, but these conditions are difficult to check, except in special cases, e.g. BDPs \cite{KM57}, and competition processes \cite{R61}.
This is due to the fact that the conditions involve infinitely many algebraic equations.

Karlin and McGregor  established  threshold results for explosivity, recurrence, as well as certain absorption of BDPs with and without absorbing states, by means of   the so-called Karlin-McGregor integral representation formula \cite{KM57}. The  existence of such  formula is essentially due to the tridiagonal structure of the $Q$-matrix. For the same reason, it is delicate to extend such an approach to \emph{generalized BDPs}: Pure birth processes  \cite{C99}, one-sided skip free CTMCs \cite{C97,C04,CPZC05}, and  recently (higher-dimensional) quasi-birth-death processes (QBDPs) with tridiagonal block structure of the $Q$-matrix \cite{FD21}.

In the context of QSDs, there are few  threshold results for certain absorption, existence and uniqueness as well as quasi-ergocidity of QSDs.  van Doorn \cite{V85,V91} obtained ergodicity,   existence and non-existence as well as uniqueness of QSDs for absorbed BDPs, also building on the Karlin-McGregor integral representation formula.
Later, Ferrari et al. \cite{FKMP95} generalized the results in \cite{V91}. They derived a necessary and sufficient condition for the existence of a QSD  on the positive integers for which zero is an absorbing state, using the so-called renewal dynamical approach, assuming the CTMC is non-explosive, and that the absorption time is finite and unbounded with probability one. Then, the existence of a QSD is equivalent to  finiteness of the exponential moment of the absorption time, for one {(and hence all)} initial transient states. But such a moment condition is again not straightforward to verify either, pending the assumptions.

{To sum up,  general  checkable  threshold criteria   for dynamical properties of CTMCs (absorbed and non-absorbed), other than generalized BDPs, are few. We identify a class of CTMCs with polynomial-like transition rates and without  arbitrary large backward jumps for which simple, checkable criteria for absorbed and non-absorbed CTMCs are found, based on the   coefficients  of the  polynomials.   The price for this is to impose some further mild regularity conditions, in addition to the two requirements mentioned above. }

\subsection*{Impact of our work and further extensions}

From the theoretical perspective,
\begin{enumerate}
\item[$\bullet$] the sufficient condition for the existence of  ergodic stationary distributions and QSDs allows us to further investigate the tail asymptotics of these distributions \cite{XHW20c} and the computation of these distributions (in a forthcoming paper).
\item[$\bullet$] the novel combination of the approaches presented here can further be extended to establish criteria for the dynamics of 1-d CTMCs with asymptotic polynomial transition rates, and higher dimensional CTMCs with $Q$-matrix of a certain block structure, in analogy with QBDPs and BDPs.
\item[$\bullet$] {a deeper understanding  of the threshold parameters
may provide insight into the dynamics of higher dimensional CTMCs on lattices.}
\end{enumerate}
From the perspective of applications,
\begin{enumerate}
  \item[$\bullet$] the criteria can be applied to completely classify the dynamics of 1-d mass-action SRNs, and in particular, we can prove the so-called Positive Recurrence Conjecture   \cite{AK18} for weakly reversible reaction networks in 1-d {\cite{WX20}.} %{(in a forthcoming paper).} 
  \item[$\bullet$] the criteria can be used to establish bifurcations of 1-d SRNs (in a forthcoming paper).
\end{enumerate}

\subsection*{Outline} In Section~\ref{sect-preliminary}, the notation and standing assumptions are introduced.  Section~\ref{sect-criteria} develops  threshold criteria for dynamical properties  of  CTMCs. Applications to  SRNs, a class of branching processes, a general bursty single-cell stochastic gene expression model, and population processes of non-birth-death process type, are provided in Section~\ref{sec5}.  Proofs of the main results are provided in Section \ref{sec:proofs}. Additional tools used in the proofs as well as proofs of some elementary propositions are appended.

\section{Preliminaries and assumptions}\label{sect-preliminary}

Let $\R$, $\R_{\ge 0}$, $\R_{>0}$ be the set of real, non-negative real, and positive real  numbers, respectively. Let $\Z$ be the set of integers, $\N=\Z\cap \R_{>0}$ and $\N_0=\N\cup\{0\}$. For $x,y\in \N$, let $x^{\underline{y}}=x(x-1)\cdots(x-y+1)$ be the descending factorial of $x$.

Let $(Y_t\colon t\ge 0)$ (or $Y_t$ for short) be a CTMC on a  closed,  { infinite}  state space $\cX\subseteq\N_0$ with conservative transition rate matrix $Q=(q_{x,y})_{x,y\in\cX}$, that is, every row sums to zero. A set $A\subseteq\cX$ is \emph{closed} if $q_{x,y}=0$ for all $x\in A$ and $y\in\cX\setminus A$ \cite{N98}. Assume the \emph{absorbing set} $\partial\subsetneq\cX$ is finite (potentially empty) and closed.  {Hence, $\cX\setminus\partial$ is unbounded.}

Let $\cT=\{y-x\colon q_{x,y}>0,\ \text{for some}\ x,y\in\cX\}$ {be the set of jump sizes}.
For  $\om\in\cT$, define the transition rate function:
$$\lambda_{\om}(x)=q_{x, x+\om},\quad x\in\cX.$$
Let $\cT_{\pm}=\{\omega\in\cT\colon {\rm sgn}(\om)=\pm1\}$ be the sets of forward and backward jump sizes, respectively.
Throughout, we assume the following {regularity conditions:}

\medskip
\noindent($\rm\mathbf{A1}$) $\cT_+\neq\varnothing$, $\cT_-\neq\varnothing$.
\medskip

\noindent($\rm\mathbf{A2}$) $\#\cT_-<\infty$.

\medskip
\noindent($\rm\mathbf{A3}$) $\sum_{\om\in\cT}\lambda_{\om}(x)|\om|<\infty,$ for all $x\in\cX$.
\medskip

\noindent($\rm\mathbf{A4}$) There exist $u, M\in\N$ such that $\lambda_{\omega}$ is a strictly positive polynomial of degree $\le M$ on the 
set $\cX\setminus\{0,\ldots,u-1\}$, for all  $\omega\in\cT$.

\medskip
\noindent($\rm\mathbf{A5}$)  $\cX\setminus\partial$ is irreducible.

\smallskip

If either $\cT_+=\varnothing$ or $\cT_-=\varnothing$, then $Y_t$ is a pure birth or death process (possibly with multiple jump sizes). 
Classification of states as well as the dynamics of such processes are  simpler than under ($\rm\mathbf{A1}$). Indeed, one can derive parallel results from the corresponding results under ($\rm\mathbf{A1}$).
Assumption ($\rm\mathbf{A2}$) implies $Y_t$ cannot make arbitrary large negative jumps.
Assumption ($\rm\mathbf{A3}$) is a  regularity condition  that  ensures functions like $x$, $\log x$ and $\log\log x$ are  in the domain of the infinitesimal generator of the CTMC, in order to serve as Lyapunov functions.
If $\cT$ is finite, then  ($\rm\mathbf{A2}$) and ($\rm\mathbf{A3}$) are automatically fulfilled.   In that case, the sums above are trivially polynomials for large $x$.

{Assumption ($\rm\mathbf{A4}$) implies
the Markov chain can make all  jumps in $\cT$ with positive probability from any `large' state $x\in\cX, x\ge u$.  Moreover,  ($\rm\mathbf{A3}$) and ($\rm\mathbf{A4}$)  together imply that $\sum_{\omega\in\cT}\lambda_{\omega}(x)$ and $\sum_{\omega\in\cT}\lambda_{\omega}(x)\om$ are polynomials of degree $\le M$ for $x\in\cX\setminus\{0,\ldots,u-1\}$ (Proposition~\ref{Spro-1}).  If ($\rm\mathbf{A4}$) fails, simple examples show that $\sum_{\omega\in\cT}\lambda_{\omega}(x)$ and $\sum_{\omega\in\cT}\lambda_{\omega}(x)\om$ may not be polynomials. That these sums are polynomials for large states is an essential property we rely on in proofs.}

{Assumption  ($\rm\mathbf{A4}$) is common in applications, especially in the context of chemical reaction networks and population processes \cite{AK11,EK09}.
Also assumption ($\rm\mathbf{A5}$) is standard  and generally satisfied in applications \cite{MP14,CV17}, potentially by restricting the state space. One can show that ($\rm\mathbf{A4}$) and ($\rm\mathbf{A5}$) together implies $\cX\setminus\partial$ is infinite, thus the assumptions are not compatible with a finite state space.}

{With the above assumptions,}  the following three parameters are well-defined and finite,
$$R=\max\{\deg(\lambda_{\omega})\colon \omega\in\cT\},\quad \alpha=\lim_{x\to\infty}\frac{\sum_{\omega\in\cT}\lambda_{\omega}(x)\omega}{x^R},\quad \gamma=\lim_{x\to\infty}\frac{\sum_{\omega\in\cT}\lambda_{\omega}(x)\omega-\alpha x^R}{x^{R-1}}.$$
If $R=0$, then trivially $\gamma=0$.
In particular,  if $\cT$ is finite, the {following additional parameter} is also well-defined and finite, \[\beta=\gamma-\frac{1}{2}\lim_{x\to\infty}\frac{\sum_{\omega\in\cT}\lambda_{\omega}(x)\omega^2}{x^R},\]
with $\beta<\gamma$. {The parameter $\alpha$ encodes the sign of the average jump size of the chain. }
It is straightforward to verify {that    \eqref{formula-alpha-gamma} is a consequence of the above parameter definitions, }
 due to the asymptotic expansions \eqref{asymptotic-transition-rate} of the transition rate functions.

\begin{example}
  Recall example \eqref{SRN-2} in the introduction,
  \begin{equation*}
0\ce{<=>[\ka_1][\ka_{2}]}m\tS\ce{<=>[\ka_3][\ka_{4}]}(m+1)\tS\ce{->[\ka_5]}(m+3)\tS,
\end{equation*}
Then $\cT=\{1,2,-1,m,-m\}$, $m\ge 1$, and 
\begin{align*}
\lambda_{1}(x) &= \left\{\begin{array}{rlc} \kappa_3 x^{\underline{m}}, & \quad \text{if}\ m> 1 \\
\kappa_3x^{\underline{m}}+\kappa_1, &  \quad \text{if}\ m=1   \end{array}\right\} {= \kappa_3x^m+\rO(x^{m-1}),}\\
\lambda_{2}(x) &=\left\{\begin{array}{rlc}\kappa_5 x^{\underline{m+1}}, &  \quad \text{if}\ m\neq 2  \\
\kappa_5x^{\underline{m+1}}+\kappa_1,&  \quad \text{if}\ m=2 
\end{array}\right\} {=\kappa_5x^{{m+1}}-\kappa_5\tfrac{m(m+1)}{2}x^{{m}}+\rO(x^{m-2}),} \\
\lambda_{-1}(x) &= \left\{\begin{array}{rlc} \kappa_4 x^{\underline{m+1}} &= \ \kappa_4x^{m+1}-\kappa_4\tfrac{m(m+1)}{2}x^m+\rO(x^{m-1}),&  \quad \text{if}\ m>1, \\
\kappa_4 x^{\underline{m+1}}+\kappa_2x^{\underline{m}} & =\ \kappa_4x^{m+1}+(\kappa_2-\kappa_4\tfrac{m(m+1)}{2})x^m+\rO(x^{m-1}),&  \quad \text{if}\ m=1,
\end{array}\right. \\
\lambda_m(x) &=\begin{array}{rc} \kappa_1, &\quad \text{if}\ m>2,\end{array} \\
\lambda_{-m}(x)& =\begin{array}{rlc} {\kappa_2}x^{\underline{m}} &=\ {\kappa_2} x^m+\rO(x^{m-1}),&\quad \text{if}\ m>1.\end{array}
\end{align*}
  Hence, $R=m+1$, and  
  $$\alpha=2\ka_5-\ka_{4},\quad \gamma={\ka_3-m\kappa_2+\tfrac{m(m+1)}{2}\kappa_4-m(m+1)\kappa_5},$$
  $$\beta={\ka_3-m\ka_{2}+\tfrac{m^2+m-1}{2}\kappa_4-(m^2+m+2)\kappa_5},$$
  by \eqref{formula-alpha-gamma}.  
 As mentioned in the introduction, the sign of $\alpha$ determines the stochastic stability of the CTMC \cite{MT93}.  
Hence,  $\alpha$ plays a similar role as the largest Lyapunov exponent. Analogously, when $\alpha=0$, the parameter $\beta$ determines the stochastic stability  and hence, plays a similar role to that of  the second largest Lyapunov exponent in the critical case.
\end{example}

\section{Criteria for dynamical properties}\label{sect-criteria}

In this section, we provide threshold criteria for various dynamical properties in terms of $R, \alpha, \beta, \gamma$. {Proofs are relegated to Section \ref{sec:proofs}.} {For ease of comparison, we collect all parameter conditions used in the main theorems below. These are listed in   the order they appear in the main theorems, see Table \ref{tab:conditions}. Figure \ref{fig:conditions} shows implications among the twenty one conditions.}

\bigskip
\begin{table}[]\begin{center}{\begin{tabular}{clcl}
 (C1) & $\alpha>0$,  $R>1$,  & (C2) &  $\alpha=0$, $\beta>0$, $R>2$, \\
(C3) & $\alpha<0$, & (C4) & $R\le1$, \\
(C5) &  $\alpha=0$, $R=2$, & (C6) & $\alpha=0,$ $\beta\le0$, \\
(C7) & $\alpha>0$, & (C8) & $\alpha=0,$ $\beta>0$, \\
(C9) &  $\alpha=0$, $\beta<0$, $R>1$, & (C10) &  $\alpha=\beta=0$, $R>2$, \\
(C11) &  $\alpha=0$, $\gamma<0$, $R=1$, & (C12) & $\alpha=0$, $\beta\le0$, $\gamma>0$, $R=1$,  \\
(C13) &  $\alpha=0$, $R=0$, & (C14) &  $\alpha=0$, $\beta<0$, $R=1$, \\
(C15) & $\alpha=0$, $\beta\le0$, $R=1$,  & (C16) & $\alpha=\beta=0$, $R=2$,  \\
(C17) & $\alpha<0$, $R\ge1$,  & (C18) & $\alpha=0$, $\beta\le0$, $R>2$, \\
(C19) &  $\alpha<0$, $R>1$, & (C20) & $\alpha<0$,  $R\le1$, \\
(C21) &  $\alpha=0$, $\beta<0$, $R=2$.
\end{tabular}}
\end{center}
\caption{{Labelling of the conditions in the main theorems in Section \ref{sect-criteria}.}}\label{tab:conditions}
\end{table}

\newcommand*{\NEarrow}{\rotatebox[origin=c]{30}{\(\Rightarrow\)}}
\newcommand*{\NWarrow}{\rotatebox[origin=c]{150}{\(\Rightarrow\)}}
\newcommand*{\SEarrow}{\rotatebox[origin=c]{330}{\(\Rightarrow\)}}
\newcommand*{\SWarrow}{\rotatebox[origin=c]{210}{\(\Rightarrow\)}}

\newcommand*{\SEEarrow}{\rotatebox[origin=c]{315}{\(\Rightarrow\)}}
\newcommand*{\SWWarrow}{\rotatebox[origin=c]{225}{\(\Rightarrow\)}}
\newcommand*{\SEEEarrow}{\rotatebox[origin=c]{335}{\(\Longrightarrow\)}}
\newcommand*{\SWWWarrow}{\rotatebox[origin=c]{205}{\(\Longrightarrow\)}}

\newcommand*{\LONGSEarrow}{\rotatebox[origin=c]{310}{\(\Longrightarrow\)}}

\color{black}
\begin{figure}
\begin{center}
\begin{tikzpicture}[scale=0.5]
\node  (a1) at (-2.5,0) {$\alpha>0$:};
\node  (A1) at (0,0) {(C1)};
\node  (A2) at (3,0) {(C7)};
\node  (I0) at (1.5,0) {$\Rightarrow$};

\node  (a3) at (-2.5,-4.5+3.0) {$\alpha=0$:};
\node  (D1) at (2.5,-9+3.0) {(C4)};
\node  (D2) at (2.5,-4.5+3.0) {(C11)};
\node  (D3) at (0,-6+3.0) {(C12)};
\node  (D4) at (2.5,-6+3.0) {(C14)};
\node  (D5) at (5,-6+3.0) {(C10)};

\node  (D6) at (0,-7.5+3.0) {(C13)};
\node  (D7) at (2.5,-7.5+3.0) {(C15)};
\node  (D8) at (5,-7.5+3.0) {(C18)};
\node  (D9) at (7.5,-7.5+3.0) {(C9)};
\node  (D10) at (10,-7.5+3.0) {(C16)};

\node  (D11) at (6.25,-9+3.0) {(C6)};
\node  (D12) at (12.5,-9+3.0) {(C5)};
\node  (D13) at (10,-6+3.0) {(C21)};
\node  (D14) at (15,-6+3.0) {(C2)};
\node  (D15) at (15,-7.5+3.0) {(C8)};

\node  (I1) at (2.5,-8.25+3.0) {$\Downarrow$};
\node  (I2) at (2.5,-5.25+3.0) {$\Downarrow$};
\node  (I3) at (1.25,-6.75+3.0) {$\SEarrow$};
\node  (I4) at (1.25,-8.25+3.0) {$\SEarrow$};
\node  (I5) at (2.5,-6.75+3.0) {$\Downarrow$};
\node  (I6) at (5,-6.75+3.0) {$\Downarrow$};
\node  (I7) at (3.9,-8.25+3.0) {$\SEEEarrow$};
\node  (I8) at (5.5,-8.25+3.0) {$\SEEarrow$};
\node  (I9) at (7,-8.25+3.0) {$\SWWarrow$};
\node  (I10) at (8.6,-8.25+3.0) {$\SWWWarrow$};
\node  (I11) at (10,-6.75+3.0) {$\Downarrow$};
\node  (I12) at (8.75,-6.75+3.0) {$\SWarrow$};
\node  (I13) at (11.24,-8.25+3.0) {$\SEarrow$};
\node  (I15) at (11.7,-7.5+3.0) {$\LONGSEarrow$};

\node  (I14) at (15,-6.75+3.0) {$\Downarrow$};

\node  (a2) at (-2.5+0.0,0-7.5) {$\alpha<0$:};
\node  (B1) at (0+0.0,0-7.5) {(C19)};
\node  (B2) at (3+0.0,0-7.5) {(C17)};
\node  (B3) at (6+0.0,0-7.5) {(C3)};
\node  (J1) at (1.5+0.0,0-7.5) {$\Rightarrow$};
\node  (J2) at (4.5+0.0,0-7.5) {$\Rightarrow$};
\node  (B4) at (3+0.0,-1.5-7.5) {(C20)};
\node  (B5) at (6+0.0,-1.5-7.5) {(C4)};
\node  (J3) at (4.5+0.0,-1.5-7.5) {$\Rightarrow$};
\node  (J4) at (4.5+0.0,-0.75-7.5) {$\NEarrow$};

\end{tikzpicture}
\end{center}
\caption{{Flow diagram of implications among the twenty one conditions.}}\label{fig:conditions}
\end{figure}
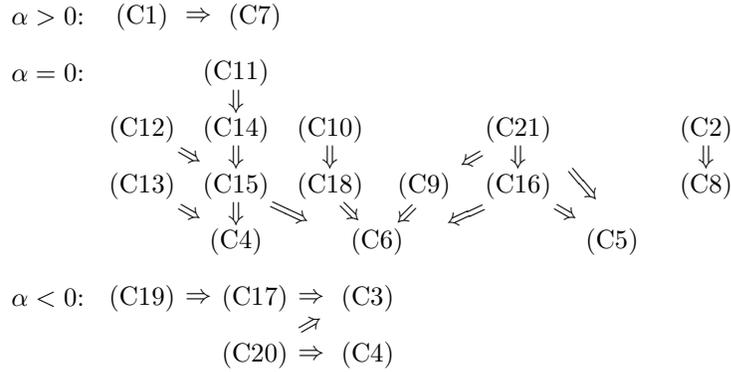

\color{black}

\subsection{Explosivity and non-explosivity}

The sequence $J=(J_n)_{n\in\N_0}$ of  {\em  jump times} {of a  CTMC $Y_t$} are defined by $J_0=0$, and $J_n=\inf\{t\ge J_{n-1}\colon Y_t\neq Y_{J_{n-1}}\}$, $n\ge1$, where $\inf\varnothing=\infty$ by convention.
The {\em life time} is denoted by $\zeta=\sup_{n}J_n$.
The process $Y_t$ is said to {\em explode} (with positive probability) at $y\in\cX$ if $\bP_y(\{\zeta<\infty\})>0$. In particular, $Y_t$ explodes almost surely (a.s.) at $y\in\cX$  if $\bP_y(\{\zeta<\infty\})=1$, and does   not  explode at $y\in\cX$ if $\bP_y(\{\zeta<\infty\})=0$ \cite{MT93}.
Hence, $Y_t$ does not explode if $Y_0\in\partial$ (since $\partial$ is closed and finite), and $\E_y(\zeta)<\infty$ implies  $Y_t$ explodes at $y$ a.s.
Recall that non-explosivity and explosivity are class properties. They hold for either all or no states in $\cX\setminus\partial$.
Hence, we simply say $Y_t$ is \emph{explosive} (explosive a.s., respectively) if it explodes  with positive probability (explodes a.s., respectively) at some state in $\cX\setminus\partial$,
and $Y_t$ is \emph{non-explosive} if it does not explode at some state in $\cX\setminus\partial$.

We present necessary and sufficient conditions for explosivity and non-explosivity.

\thmb\label{th-7}
Assume $\rm(\mathbf{A1})$-$\rm(\mathbf{A5})$, and that $\cT$ is finite. Then, $Y_t$ is explosive with positive probability if and only if {either (C1) or (C2) holds.}
Moreover, $Y_t$ is explosive a.s. whenever it is explosive, provided $\partial=\varnothing$.
\thme

\begin{theorem}
  \label{cor-infinite-explosivity}
  Assume $\rm(\mathbf{A1})$-$\rm(\mathbf{A5})$ and that $\cT$ is infinite. Then, $Y_t$ is explosive if {(C1)} holds, and it is non-explosive if {either of the three conditions (C3), (C4), (C5) holds.} Moreover, $Y_t$ is explosive a.s.~whenever it is explosive, provided $\partial=\varnothing$.
\end{theorem}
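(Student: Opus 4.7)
The plan is to adapt the Lyapunov-function arguments from the proof of Theorem~\ref{th-7} to the infinite-$\cT$ setting. The key structural input is that by $(\mathbf{A3})$ and $(\mathbf{A4})$ (see Proposition~\ref{Spro-1}), the sums $\sum_{\omega\in\cT}\lambda_\omega(x)$ and $\sum_{\omega\in\cT}\lambda_\omega(x)\omega$ remain polynomials of degree $\le M$ on $\cX\setminus\{0,\ldots,u-1\}$, so $R$ and $\alpha$ retain the same interpretation as leading asymptotic parameters of the drift as in the finite case; this is what lets us reuse most of the earlier constructions. The final a.s.\ explosion claim under $\partial=\varnothing$ reduces to the standard class-property argument: $(\mathbf{A5})$ gives irreducibility of $\cX$, the event $\{\zeta<\infty\}$ is a class event, so $\bP_y(\zeta<\infty)\in\{0,1\}$ is constant in $y$, and the explosivity direction below provides positive probability of explosion from some (hence every) state.

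For explosivity under (C1), I would use the bounded, strictly positive test function $V(x)=x^{-(R-1)}$ (well-defined since $R>1$) and aim to show $QV(x)\le -c$ for some $c>0$ and all sufficiently large $x$, after which the conclusion follows from the supermartingale stopping argument employed for Theorem~\ref{th-7}. The contribution of $\omega\in\cT_-$ is handled by a second-order Taylor expansion, which is essentially the finite-$\cT$ argument since $\cT_-$ is finite and bounded by $(\mathbf{A2})$. The contribution of $\omega\in\cT_+$ is handled via the exact identity
\[
V(x+\omega)-V(x)\;=\;-(R-1)\!\int_{0}^{\omega}(x+s)^{-R}\,ds \;\le\; -(R-1)\,\omega\,(x+\omega)^{-R},\qquad \omega>0,
\]
combined with a dyadic splitting of the positive-jump sum according to whether $\omega\le x$ or $\omega>x$: the sub-$x$ piece contributes $\asymp -2^{-R}x^{-R}\sum_{0<\omega\le x}\lambda_\omega(x)\omega$, which produces the desired leading order because $\alpha>0$ forces the positive-$\omega$ part of $\alpha$ to be strictly positive, while the super-$x$ tail is controlled through $(\mathbf{A3})$ together with $R>1$.

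For non-explosivity under (C3), (C4), or (C5), I would invoke the classical Foster--Meyn--Tweedie non-explosion criterion \cite{MT93}: it suffices to exhibit $W\colon\cX\to[1,\infty)$ with $W\to\infty$ and $QW(x)\le cW(x)$ on $\cX\setminus\partial$. The choice $W(x)=x+1$ works uniformly in all three cases: by $(\mathbf{A3})$ and $(\mathbf{A4})$, $QW(x)=\sum_{\omega\in\cT}\lambda_\omega(x)\omega$ is a polynomial of degree at most $R$ on $\cX\setminus\{0,\ldots,u-1\}$; under (C4) it has degree $\le 1$ outright, under (C5) the vanishing of $\alpha$ kills the $x^{2}$-coefficient so it again has degree $\le 1$, and under (C3) the leading coefficient $\alpha<0$ makes $QW$ bounded from above on $\cX\setminus\partial$. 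In each case $QW(x)\le cW(x)$ for a suitable $c>0$, and non-explosivity follows. I expect the main obstacle to lie in the explosivity direction, specifically in extracting the correct leading-order asymptotics of $QV$ while managing the infinite positive-jump sum: the naive second-order Taylor remainder involves $\sum_\omega\lambda_\omega(x)\omega^{2}$, which $(\mathbf{A3})$ alone does not guarantee to be finite, and it is precisely the integral representation and dyadic splitting above that are designed to bypass this difficulty.
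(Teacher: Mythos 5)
Your non-explosivity argument is correct and in fact more elementary than the paper's: the paper uses $f(x)=\log\log(x+1)$ together with Proposition~\ref{Spro-13}, whereas your linear $W(x)=x+1$ with the standard criterion $QW\le cW$ from \cite{MT93} works because, by Proposition~\ref{Spro-1}, $QW(x)=\sum_{\om\in\cT}\lambda_{\om}(x)\om$ is a polynomial of degree at most $R$ whose degree drops to at most $1$ under (C4) and (C5), and whose leading coefficient is negative under (C3).

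The explosivity direction, however, has a genuine quantitative gap. Write $\alpha_{\pm}=\lim_{x\to\infty}x^{-R}\sum_{\om\in\cT_{\pm}}\lambda_{\om}(x)\om$, so $\alpha=\alpha_++\alpha_-$ with $\alpha_-\le 0$ and (since $\alpha>0$) $\alpha_+>0$. Because $V$ is decreasing, the finite set $\cT_-$ contributes to $QV$ a \emph{positive} amount $(R-1)|\alpha_-|+\rO(x^{-1})$, while your bound $(x+\om)^{-R}\ge(2x)^{-R}$ on the range $0<\om\le x$ only yields
\[
QV(x)\;\le\;(R-1)\Bigl(|\alpha_-|-2^{-R}\bigl(1-\ro(1)\bigr)\alpha_+\Bigr)+\ro(1),
\]
and this right-hand side need not be negative: the hypothesis $\alpha>0$ gives $\alpha_+>|\alpha_-|$ but not $2^{-R}\alpha_+>|\alpha_-|$ (take $R=2$, $\alpha_+=1$, $\alpha_-=-0.9$). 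The dyadic split at $\om=x$ thus discounts the positive drift by a factor $2^{-R}$ that can flip the sign of the estimate. The paper avoids exactly this loss: it fixes $\epsilon_0$ with $\alpha_-+(1-\epsilon_0)\alpha_+>0$, uses \eqref{Eq-4} of Proposition~\ref{Spro-1} to find a \emph{fixed} $N_0$ such that the jumps $\om\in\cT_+\cap[1,N_0]_1$ carry at least a fraction $1-\epsilon_0$ of the positive drift, discards the remaining (negative) terms, and Taylor-expands uniformly on the finite set $\cT_-\cup(\cT_+\cap[1,N_0]_1)$, recovering the full coefficient $-\delta(\alpha_-+(1-\epsilon_0)\alpha_+)$. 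Your argument can be repaired in the same spirit by splitting at $\om\le\epsilon x$ for small $\epsilon$, so that the discount factor becomes $(1+\epsilon)^{-R}$, close to $1$; as written, the estimate does not close. Separately, your justification of the a.s.\ statement via a zero--one law for $\{\zeta<\infty\}$ is not valid: the paper explicitly notes that irreducible chains can explode with probability strictly between $0$ and $1$. The correct route, which your supermartingale step already supplies once fixed, is that Proposition~\ref{Spro-3} yields $\E_x(\zeta)<\infty$ for all $x$, hence $\bP_x(\zeta<\infty)=1$.
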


{Explosion might occur  with  probability less than one for CTMCs with \emph{non}-polynomial transition rates and $\partial=\varnothing$ \cite{MP14}.}
Reuter's criterion and  { generalizations  of it} provide necessary and sufficient conditions for explosivity (with positive probability) for general CTMCs  in terms of convergence or divergence of a series \cite{CPZC05,LL09,R57}. {However, these conditions} are {\em not} easy to check.
In comparison, for CTMCs with polynomial transition rates, Theorem \ref{th-7} provides an {\em explicit} and checkable necessary and sufficient condition.

\subsection{Recurrence \emph{vs} transience, and certain absorption}

For a non-empty subset $A\subseteq\cX$, let $\tau_A=\inf\{t\ge0\colon Y_t\in A\}$ be the {\em hitting time} of $A$, with the convention that $\inf\varnothing=\infty$. If $Y_0\in A$, then $\tau_A=0$.  Let $\tau^+_A=\inf\{t\ge J_1\colon Y_t\in A\}$ be the {\em first return time} to $A$. Obviously, $\tau_A=\tau^+_A$ if and only if $Y_0\notin A$. The process $Y_t$  has \emph{certain absorption} if the hitting time of $\partial$ is finite a.s. for all $Y_0\in\cX$.

 \thmb\label{th-8}
Assume $\rm(\mathbf{A1})$-$\rm(\mathbf{A5})$ and that $\Omega$ is finite.

(i) Assume $\partial=\varnothing$. Then, $Y_t$ is recurrent if {either (C3) or (C6) holds},
while it is transient {if neither of them hold.}

(ii) Assume $\partial\neq\varnothing$. Then, $Y_t$ has certain absorption if and only if {either (C3) or (C6) holds.}

\thme

{The results  show that CTMCs with polynomial transition rates cannot have an infinite series of critical transitions from  recurrence to transience, for varying parameter values. This is contrary to CTMCs with non-polynomial transition rates; as discovered in \cite{MP14}. One might hope, that this phenomenon carries over to CTMCs with  polynomial transition rates in dimensions higher  than one.}

\begin{theorem}
  \label{cor-infinite-recurrence}
Assume $\rm(\mathbf{A1})$-$\rm(\mathbf{A5})$ and that $\Omega$ is infinite.

(i) Assume $\partial=\varnothing$. Then, $Y_t$ is recurrent if {(C3) holds, and it is transient if (C7) holds.}

(ii) Assume $\partial\neq\varnothing$. Then, $Y_t$ has certain absorption if {(C3) holds, while it has not certain absorption if (C7) holds.}
\end{theorem}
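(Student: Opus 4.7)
The plan is to mirror the proof of Theorem~\ref{th-8} by constructing two Lyapunov functions, one for each sign of $\alpha$, while using $(\mathbf{A2})$ and $(\mathbf{A3})$ to control the (possibly infinitely many) forward jump sizes. Foster--Lyapunov machinery in the spirit of \cite{MT93,MP14} can then be invoked, leading to essentially the same conclusions as in the finite-$\cT$ case of Theorem~\ref{th-8}.

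First, for the cases covered by (C3), I would take the linear Lyapunov function $V(x)=x$. By $(\mathbf{A3})$, the generator applied to $V$, namely $\cL V(x)=\sum_{\om\in\cT}\lambda_\om(x)\om$, is finite on $\cX$; and by $(\mathbf{A4})$ combined with Proposition~\ref{Spro-1}, it agrees on $\cX\setminus\{0,\ldots,u-1\}$ with a polynomial of leading term $\alpha x^R$, which is eventually negative since $\alpha<0$. Foster's criterion then yields recurrence in~(i). In~(ii), the same drift condition, together with the facts that $\partial$ is finite and closed and that $\cX\setminus\partial$ is irreducible by $(\mathbf{A5})$, yields certain absorption: the chain returns infinitely often to some bounded neighbourhood of $\partial$, and from each such visit it enters $\partial$ within finite time with a uniform positive probability.

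Second, for the cases covered by (C7), I would choose the bounded, strictly positive function $V(x)=(1+x)^{-\theta}$ for some small $\theta>0$. Boundedness of $V$ together with $(\mathbf{A3})$ guarantees that $\cL V$ is finite on $\cX$, and the aim is to show that $\cL V(x)\le 0$ for $x$ outside a finite set. The $\cT_-$ part of the generator sum is handled by a Taylor expansion, which is valid because $|\om|$ is uniformly bounded by $(\mathbf{A2})$; it contributes a positive term of order $x^{R-\theta-1}$ governed by the (negative) backward mean drift. The $\cT_+$ part is estimated using the global inequality $(1+s)^{-\theta}-1\le-(1-2^{-\theta})\min(s,1)$ for $s\ge 0$, after splitting the forward sum at the threshold $\om=1+x$ and controlling the large-jump tail via the Markov-type bound $\sum_{\om>1+x}\lambda_\om(x)\le(1+x)^{-1}\sum_{\om>1+x}\lambda_\om(x)\om$. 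Since $\alpha>0$, the negative forward contribution dominates the positive backward one once $\theta$ is fixed small enough in terms of the leading coefficients of the $\lambda_\om$. Standard transience criteria (cf.\ \cite{MT93,MP14}) then give transience in~(i); in~(ii), a bounded supermartingale argument applied to (a suitable modification of) $V$ stopped at $\tau_\partial$ yields $\bP_y(\tau_\partial=\infty)>0$ for some $y\in\cX\setminus\partial$, which extends to every such $y$ by $(\mathbf{A5})$.

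The hardest step is the forward-jump estimate in the transience half. With $\cT_+$ infinite, $|\om|$ is not uniformly controlled, and $(\mathbf{A3})$ only supplies a first absolute moment of jumps, so any Taylor-based estimate invoking the second moment is off-limits. The remedy above replaces the local Taylor inequalities for $t\mapsto(1+t)^{-\theta}$ by the global one recorded above and uses the split at $\om\sim x$ together with the Markov bound to handle large jumps, appealing only to the first moment, which is precisely what $(\mathbf{A3})$ guarantees.
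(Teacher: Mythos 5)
Your overall strategy is the one the paper itself uses: reduce to the finite-$\cT$ arguments of Theorem~\ref{th-8} by replacing exact asymptotic expansions with one-sided estimates, handling the finitely many backward jumps by Taylor expansion and the possibly infinite forward sum by a separate device. The (C3) half is essentially fine: the paper uses $f(x)=x+1$ for the same purpose in Theorem~\ref{cor-infinite-ergodicity}, non-explosivity under (C3) is available from Theorem~\ref{cor-infinite-explosivity}, and the reduction of certain absorption to recurrence of an auxiliary irreducible chain matches the proof of Theorem~\ref{th-8}(ii). Your $V(x)=(1+x)^{-\theta}$ for transience is, up to an affine change, the paper's $1-(1+\wY_n)^{-\delta}$.

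However, the forward-jump estimate in the transience half --- the step you yourself flag as the hardest --- does not close. The chord inequality $(1+s)^{-\theta}-1\le-(1-2^{-\theta})\min(s,1)$ yields at best a forward contribution of $-(1-2^{-\theta})(1+x)^{-\theta-1}\sum_{\om\in\cT_+}\lambda_\om(x)\om\sim-(1-2^{-\theta})\,\alpha_+\,x^{R-\theta-1}$, whereas the backward Taylor expansion contributes $+\theta|\alpha_-|\,x^{R-\theta-1}$ to leading order (with $\alpha_\pm$ as in the proof of Theorem~\ref{cor-infinite-explosivity}). Since $1-2^{-\theta}<\theta\log 2$ for every $\theta>0$, your requirement $QV(x)\le0$ forces $\log 2\cdot\alpha_+>|\alpha_-|$, which is strictly stronger than (C7), i.e.\ $\alpha_+>|\alpha_-|$; the lost factor $\log 2$ cannot be recovered by shrinking $\theta$, because both competing terms scale linearly in $\theta$. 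Concretely, $\lambda_1(x)=x$ and $\lambda_{-1}(x)=0.8x$ (augmented by arbitrarily small rates on large forward jumps to make $\cT_+$ infinite) satisfies (C7) yet defeats your bound for every $\theta>0$. Your Markov-type tail bound also points the wrong way: it gives an upper bound on a term that enters with a favourable (negative) sign, so it contributes nothing to the inequality you need. The repair is the device already used in the proof of Theorem~\ref{cor-infinite-explosivity}: fix $N_0$ via Proposition~\ref{Spro-1} so that $\sum_{\om\in\cT_+\cap[1,N_0]_1}\lambda_\om(x)\om\ge(1-\epsilon_0)\sum_{\om\in\cT_+}\lambda_\om(x)\om$ uniformly for large $x$, discard the forward terms with $\om>N_0$ (they are nonpositive, so discarding them only weakens the desired inequality in the right direction), and Taylor-expand the remaining finite sum; this recovers the sharp constant $\theta$ in front of $(1-\epsilon_0)\alpha_+$ and closes the argument under $\alpha>0$ alone.
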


\subsection{Moments of hitting times}

Below we present threshold results on the existence of  moments of hitting times  {for recurrent states only, as transient states have infinite return time.} Therefore, in the light of Theorem~\ref{th-8}, we investigate the existence and non-existence of moments of hitting times only for $\alpha<0$ and for $\alpha=0,\ \beta\le0$. Moreover, limited by the tools we apply,  we do not discuss existence and non-existence of   moments of    absorption times for $\partial\neq\varnothing$.  Hence, we assume $Y_t$ is irreducible on  $\cX$ (equivalently, $\partial=\varnothing$) and provide existence and non-existence of moments of hitting times for states in $\cX$.

\thmb\label{th-11}
Assume $\rm(\mathbf{A1})$-$\rm(\mathbf{A5})$, $\partial=\varnothing$, and that $\Omega$ is finite. {Then,  the following holds:}

(i) There exists a finite non-empty subset $B\subseteq\cX$, such that
\eqb\label{Eq-exist}\E_x(\tau^{\epsilon}_B)<+\infty,\quad \forall x\in\cX,\quad {\forall 0<\epsilon<\delta,}
\eqe
{for $\delta>0$, provided  one of the conditions (C3), (C9), (C10) holds; for $0<\delta<1/2$, provided (C13) holds; for $0<\delta<\tfrac\beta{\beta-\gamma}$, provided (C14) holds; and for $0<\delta<1$, provided (C16)  holds.}
 {In particular, $\E_x(\tau_B)<+\infty$, provided one of the conditions (C3), (C9), (C10), (C11) holds.}

(ii)  There exists a finite non-empty subset $B\subseteq\cX$, such that
\begin{equation*}
\E_x(\tau_B^{\epsilon})=+\infty,\quad \forall x\in\cX\setminus B,\quad {\forall \epsilon>\delta,}
\end{equation*}
{for $\delta>1$, provided (C13) holds; for $\delta>\tfrac\beta{\beta-\gamma}$, provided  (C15)   holds; and for $\delta>1$, provided (C16)  holds.
 In particular, $\E_x(\tau_B)=+\infty$ provided  (C12) holds.}  
\thme

\begin{theorem}\label{cor-infinite-passagetime}
Assume $\rm(\mathbf{A1})$-$\rm(\mathbf{A5})$, $\partial=\varnothing$, and that $\Omega$ is infinite. {If (C3) is fulfilled and $0<\delta\le1$, then \eqref{Eq-exist} holds.}
\end{theorem}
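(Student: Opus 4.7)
The plan is to take the Lyapunov function $V(x)=1+x$ and prove that $\E_x(\tau_B)<\infty$ for a suitable finite nonempty set $B\subseteq\cX$; fractional moments $\E_x(\tau_B^\epsilon)$ with $\epsilon\in(0,1)$ then follow by Jensen's inequality applied to the concave map $t\mapsto t^\epsilon$. Assumption $(\mathbf{A3})$ ensures that $V$ lies in the domain of the extended generator, with
\[
QV(x)=\sum_{\om\in\cT}\lambda_\om(x)\,\om,\qquad x\in\cX.
\]
By $(\mathbf{A4})$ and Proposition \ref{Spro-1}, $QV$ coincides with a polynomial of degree at most $M$ on $\cX\setminus\{0,\ldots,u-1\}$, and by the definition of $\alpha$, its leading term is $\alpha x^R$. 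Since (C3) gives $\alpha<0$, there exist $N\in\N$ and $c>0$ such that $QV(x)\le -c$ for all $x\in\cX$ with $x\ge N$. Set $B=\{x\in\cX\colon x<N\}$; it is finite, nonempty, and $QV\le -c$ on $\cX\setminus B$.

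Next, Theorem \ref{cor-infinite-explosivity} implies that $Y_t$ is nonexplosive under (C3). Applying Dynkin's formula to $V$ along the stopping times $\tau_B\wedge t\wedge S_n$, where $S_n$ denotes the exit time from $\{x\in\cX\colon x\le n\}$, and using $(\mathbf{A3})$, nonexplosivity, and monotone convergence as $n\to\infty$ and $t\to\infty$, one obtains
\[
c\,\E_x(\tau_B)\le V(x)<\infty,\qquad x\in\cX\setminus B,
\]
while $\tau_B=0$ trivially for $x\in B$. Jensen's inequality then yields $\E_x(\tau_B^\epsilon)\le(\E_x(\tau_B))^\epsilon<\infty$ for every $\epsilon\in(0,1]$ and every $x\in\cX$, which implies \eqref{Eq-exist} for any $\delta\in(0,1]$.

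I expect the main obstacle to be justifying rigorously the supermartingale property of $V(Y_{t\wedge\tau_B})+c(t\wedge\tau_B)$ in the presence of the infinite jump set $\cT$. The unboundedness of $V$, combined with arbitrarily large forward jumps, requires a careful localization leaning on both nonexplosivity (so the process remains in $\cX$ for all time) and $(\mathbf{A3})$ (so the jump part of the generator converges absolutely and the dominated/monotone passages to the limit are valid). The moment restriction $\delta\le 1$ is natural within this method, since for $\delta>1$ one would instead take $V(x)=(1+x)^\delta$, whose jump increments $(x+\om)^\delta-x^\delta$ may be of order $|\om|^\delta$ and are no longer controlled by the single hypothesis $\sum_\om\lambda_\om(x)|\om|<\infty$ supplied by $(\mathbf{A3})$; this is precisely why the infinite-$\cT$ analogue is weaker than the corresponding case of Theorem~\ref{th-11}.
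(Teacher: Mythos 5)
Your argument is correct, and it reaches the stated conclusion (in fact slightly more, since Jensen also covers the endpoint $\epsilon=1$). It shares with the paper the essential analytic input — the one-sided drift estimate $Q(x+1)\le -c$ outside a finite set, which survives the passage to infinite $\Omega$ because $(\mathbf{A3})$ and Proposition~\ref{Spro-1} control $\sum_{\om\in\cT}\lambda_\om(x)\om$ and its leading term $\alpha x^R$ — but the route from that drift inequality to the moment bound is genuinely different. The paper simply reruns the proof of Theorem~\ref{th-11} for case (C3), i.e.\ it feeds the drift estimate into the fractional-moment criterion of \cite{MP14} (Proposition~\ref{Spro-7}(i)), replacing two-sided asymptotic expansions by one-sided inequalities. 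You instead prove $\E_x(\tau_B)<\infty$ directly by optional stopping/Dynkin with the localizing times $\tau_B\wedge t\wedge S_n$, and then deduce all moments of order $\epsilon\le 1$ by concavity. Your route buys two things: it is self-contained (no appeal to the black-box moment criterion), and it sidesteps the membership condition $f^{\sigma}\in\mathfrak{F}$ required by Proposition~\ref{Spro-7}(i), which is genuinely delicate here — to reach exponents $\epsilon$ close to $1$ that criterion needs $\sigma$ close to $2$, and for $f(x)=x+1$ and unbounded forward jumps the condition $\sum_{\om}\lambda_{\om}(x)(x+\om+1)^{\sigma}<\infty$ is not guaranteed by $(\mathbf{A3})$ alone once $\sigma>1$ (one must instead take $f(x)=\sqrt{x+1}$ with $\sigma=2$ so that $f^\sigma=x+1\in\mathfrak{F}$). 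Your observation that $\delta\le 1$ is exactly the range accessible from first-moment control of the jumps is the right explanation for why the infinite-$\Omega$ statement is weaker than Theorem~\ref{th-11}. The localization details you flag are standard and go through: before time $S_n$ the chain lives in a finite set on which $|QV|$ is bounded, $V(Y_{S_n})$ is integrable by $(\mathbf{A3})$, and non-explosivity (Theorem~\ref{cor-infinite-explosivity} under (C3)) lets $S_n\uparrow\infty$, so $c\,\E_x(\tau_B\wedge t)\le V(x)$ and monotone convergence in $t$ finishes the estimate.
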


\subsection{Positive recurrence and null recurrence}

We provide sharp criteria for positive and null recurrence, as well as  exponential ergodicity of stationary distributions and QSDs.

 If $\partial=\varnothing$, then $\tau_{\partial}=\infty$ a.s., and the conditional process $(Y_t\colon \tau_{\partial}>t)$ reduces to $(Y_t\colon t\ge 0)$.
If $\partial\neq\varnothing$, and $\tau_{\partial}<\infty$ a.s. (that is, $Y_t$ has certain absorption), then {process conditioned to never be absorbed} $(Y_t\colon \tau_{\partial}>t)$ is {referred to as the \emph{$Q$-process} \cite{CMM13,CV16}.} 

{The process $(Y_t\colon \tau_{\partial}>t)$} on $\cX\setminus\partial$
 is said to be \emph{exponentially ergodic},  if there exists a probability measure $\mu_*$  and $0<\delta<1$, such that for all probability measures $\mu$ on $\cX\setminus\partial$,
there exists a constant $C_{\mu}>0$, such that
\[{|\mathbb{P}_{\mu}(Y_t\in B|\tau_{\partial}>t)-\mu_*(B)|\le C_{\mu}\delta^t,\quad \forall t>0,\ B\subseteq \cX\setminus\partial}\]
 \cite{GS02}. {The measure $\mu^*$ is also said to be exponentially ergodic.}
In particular, if $C_{\mu}$ can be chosen independently of $\mu$, then $(Y_t\colon \tau_{\partial}>t)$ {and $\mu^*$} is said to \emph{uniformly exponentially ergodic}. {Moreover, when $\partial=\varnothing$, then $\mu_*$ is the unique ergodic stationary distribution; when $\partial\neq\varnothing$, then $\mu_*$ is a \emph{quasi-limiting distribution} (QLD)  
\cite{CMM13}. %In this case, we simply refer to \emph{ergodicity of the $Q$-process} for exponential ergodicity of the $Q$-process, for the sake that non-exponential convergence of the QSD does not appear in this paper.
}

If $\partial\neq\varnothing$, a probability measure $\nu$ on $\cX\setminus\partial$ is a QSD for $Y_t$
 if for all $t\ge0$ and all  sets $B\subseteq \cX\setminus\partial$,
\[\mathbb{P}_{\nu}(Y_t\in B|\tau_{\partial}>t)=\nu(B).\]
{Any QLD is a QSD \cite{CMM13}.} The existence of a QSD implies certain absorption,  {and exponential ergodicity of the $Q$-process  
 implies  existence of a unique QSD \cite{CMM13}.} A probability measure $\nu$ on $\cX\setminus\partial$ is a {\em quasi-ergodic distribution}  if, for any $x\in\cX\setminus\partial$ and any bounded function $f$ on $\cX\setminus\partial$ \cite{BR99,HZZ19}, {the following limit holds:}
\[\lim_{t\to\infty}\E_x\!\!\lt(\frac{1}{t}\int_0^tf(Y_s){\rm d}s\Big|\tau_{\partial}>t\rt)=\int_{\cX\setminus\partial}f{\rm d}\nu.\]
{A quasi-ergodic distribution is in general different from a QSD \cite{HZZ19}.}
\thmb\label{th-9}
Assume $\rm(\mathbf{A1})$-$\rm(\mathbf{A5})$ and that $\Omega$ is finite.

(i) Assume $\partial=\varnothing$ and that $Y_t$ is recurrent. Then, $Y_t$ is positive recurrent and there exists a unique stationary distribution $\pi$ on $\cX$, if and only if one of the conditions {(C3), (C9), (C10), (C11)} holds, while $Y_t$ is null recurrent if and only if none of the conditions  {(C3), (C9), (C10), (C11)}  hold.
{Moreover, $Y_t$ is exponentially ergodic if either (C17) or (C18)  holds.}

(ii) Assume $\partial\neq\varnothing$ and that $Y_t$ has certain absorption.
Then, there exist no QSDs if none of the conditions {(C3), (C9), (C10), (C11)} hold. In contrast, there exists a unique uniformly exponentially ergodic {QLD},  supported on $\cX\setminus\partial$,  if {either (C18) or (C19)}  holds.
{Morevover, it is also a unique quasi-ergodic distribution and the unique stationary distribution of the $Q$-process.}
\thme

\begin{theorem}\label{cor-infinite-ergodicity}
Assume $\rm(\mathbf{A1})$-$\rm(\mathbf{A5})$ and that  $\cT$ is infinite.
 \begin{enumerate}
   \item[(i)] Assume $\partial=\varnothing$. Then, $Y_t$ is positive recurrent and there exists a unique stationary distribution $\pi$ on $\cX$, if {(C3)} holds. Moreover, $\pi$ is exponentially ergodic, if {(C17)} holds.
\item[(ii)] Assume $\partial\neq\varnothing$. Then, there exist no QSDs, if {(C7)} holds, while there exists a unique uniformly exponentially ergodic {QLD} 
supported on $\cX\setminus\partial$, if {(C19)}  holds.
 \end{enumerate}
\end{theorem}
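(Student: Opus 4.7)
The strategy parallels that of Theorem~\ref{th-9} but dispenses with techniques requiring $\cT$ finite, relying instead on the simple Lyapunov function $V(x)=x$. Assumption ($\rm\mathbf{A3}$) guarantees
\[
\mathcal{L}V(x)=\sum_{\omega\in\cT}\lambda_\omega(x)\omega
\]
is well-defined and finite for every $x$, and by definition of the parameters $R$ and $\alpha$ this drift satisfies $\mathcal{L}V(x)=\alpha x^R+o(x^R)$ as $x\to\infty$. These parameters remain finite under the standing hypotheses even when $\cT$ is infinite, which is what makes the Foster--Lyapunov analysis go through in essentially the same form as for finite $\cT$.

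For part (i), under (C3) the drift $\mathcal{L}V$ is eventually bounded above by a strictly negative constant, so combined with non-explosivity (Theorem~\ref{cor-infinite-explosivity}) and irreducibility from ($\rm\mathbf{A5}$), Foster's criterion yields positive recurrence and the existence of a unique stationary distribution $\pi$. For exponential ergodicity under (C17), the same drift becomes the geometric drift inequality $\mathcal{L}V(x)\le -cV(x)$ for large $x$, thanks to $R\ge 1$; combined with a small-set argument obtained from ($\rm\mathbf{A4}$) (which ensures irreducibility in a Doeblin-type sense on finite sets of large states), this delivers exponential ergodicity of $\pi$ via the standard Meyn--Tweedie theorem.

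For part (ii), the non-existence of QSDs under (C7) is immediate from Theorem~\ref{cor-infinite-recurrence}(ii): (C7) implies that $Y_t$ does not have certain absorption, but the existence of any QSD would force certain absorption (as recalled just above the statement), a contradiction. For the uniform exponential ergodicity of a unique QLD under (C19), the plan is to verify the Champagnat--Villemonais conditions in \cite{CV17}, namely a Lyapunov drift on $\cX\setminus\partial$ together with a Dobrushin-type minorization on a finite set. With $R>1$ and $\alpha<0$, the function $V(x)=x$ (or $V(x)=x^p$ for some $p>1$ if a stronger contraction is needed, the degree-$R$ monomial still dominating the drift) furnishes the required contraction, while ($\rm\mathbf{A4}$) supplies the minorization through strict positivity of $\lambda_\omega$ on states $\ge u$ for every $\omega\in\cT$. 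The Champagnat--Villemonais framework then automatically identifies the QLD with the unique quasi-ergodic distribution and with the stationary distribution of the $Q$-process.

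The main obstacle is part (ii) under (C19): deploying the Champagnat--Villemonais criteria in the infinite-$\cT$ regime requires uniform tightness of the conditioned distributions $(Y_t\,|\,\tau_\partial>t)$, which for CTMCs with unboundedly large forward jumps is delicate. The drift estimate $\mathcal{L}V(x)=\alpha x^R+o(x^R)$ with $\alpha<0$ and $R>1$ is precisely what produces this tightness, but stitching the drift and minorization into a single uniform bound—so that the constant $C_\mu$ in the exponential-ergodicity definition can be taken independent of $\mu$—is the technical step where the unboundedness of $\cT$ must be handled carefully. All other ingredients reduce to drift computations that are essentially identical to those in the finite-$\cT$ case already established in Theorem~\ref{th-9}.
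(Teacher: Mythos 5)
Parts of your argument do track the paper: for (i) the paper likewise uses $f(x)=x+1$, and for this particular $f$ the drift $Qf(x)=\sum_{\om\in\cT}\lambda_\om(x)\om$ is an exact polynomial of degree at most $R$ even when $\cT$ is infinite (Proposition~\ref{Spro-1}), so the Foster--Lyapunov and Meyn--Tweedie steps go through as you say (you should still invoke recurrence from Theorem~\ref{cor-infinite-recurrence}(i) before applying the positive-recurrence criterion, since Proposition~\ref{Spro-9} assumes it). Your deduction of the non-existence of QSDs under (C7) from ``existence of a QSD implies certain absorption'' plus Theorem~\ref{cor-infinite-recurrence}(ii) is exactly the intended mechanism.

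The genuine gap is in part (ii) under (C19). The criterion the paper uses for \emph{uniform} exponential ergodicity of the QLD (Proposition~\ref{Spro-5}, the Champagnat--Villemonais condition) requires a \emph{bounded} Lyapunov function: $f|_{\cX\setminus\partial}\ge1$, $f|_{\partial}=0$, $\sup_{\cX\setminus\partial}f<\infty$, and $Qf\le-\psi_1 f$ off a finite set with $\psi_1$ exceeding the decay parameter. Your choice $V(x)=x$ is unbounded and cannot be fed into this criterion; a geometric drift with unbounded $V$ yields at best a convergence bound whose constant is controlled by $\int V\,{\rm d}\mu$, which is not uniform over initial distributions $\mu$ --- and uniformity is precisely the assertion to be proved. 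The fallback $V(x)=x^p$ with $p>1$ is worse in the infinite-$\cT$ regime: assumption $\rm(\mathbf{A3})$ controls only first moments of the jump distribution, so $\sum_{\om\in\cT_+}\lambda_\om(x)(x+\om)^p$ need not converge and $V$ need not lie in the domain $\mathfrak{F}$ of the generator at all. The paper instead takes the bounded function $f(x)=(2-x^{-1})\mathbbm{1}_{\cX\setminus\partial}(x)$, for which $f(x+\om)-f(x)=\om/\bigl(x(x+\om)\bigr)$, so $Qf(x)$ is of order $x^{-2}\sum_{\om}\lambda_\om(x)\om\sim\alpha x^{R-2}$ while $f\asymp1$, and verifies the drift hypothesis of Proposition~\ref{Spro-5}; the infinite-$\cT$ issue is then disposed of by the one-sided truncation estimate of the positive-jump sum from Proposition~\ref{Spro-1} (as in the proof of Theorem~\ref{cor-infinite-explosivity}), not by a separate uniform-tightness argument. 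To repair your proof, replace $V$ by such a bounded function vanishing on $\partial$ and check the hypotheses of Proposition~\ref{Spro-5} directly.
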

{We provide some perspectives.}

$\bullet$ The convergence (or ergodicity) in Theorem \ref{cor-infinite-ergodicity}(ii) is uniform with respect to the initial distribution, while in contrast, the convergence  in Theorem \ref{cor-infinite-ergodicity}(i) is not uniform.  Indeed, for the subcritical linear BDP, the stationary distribution is exponentially ergodic but not uniformly so \cite{A91}.

$\bullet$ Indeed, one can obtain  {\em uniform} exponential ergodicity in Theorem \ref{th-9}(i) with {(C18) or (C19)  by choosing a non-reachable absorption set (potentially empty), hence imposing that the time to extinction is infinite. In so, the QSD is in fact a stationary distribution \cite{CV17}.}

 $\bullet$ The subtle difference between the conditions for positive recurrence and   for {exponential} ergodicity of QSDs lies in the fact that we have no {\em a priori} estimate of the {\em decay parameter}
\[\psi_0=\inf\bigl\{\psi>0\colon \liminf_{t\to\infty}e^{\psi t}\mathbb{P}_x(X_t=x)>0\bigr\} \]
(which is independent of $x$) \cite{CV17}.
We cannot compare $\psi_0$ with $-\alpha$ when $R>1$, or with $-\beta$, when $R>2$ and $\alpha=0$. Refer to the constructive proofs (using Lyapunov functions) in Appendix \ref{appB} for details. { Hence, one may believe that the condition we provide for quasi-ergodicity   generically is stronger than that for ergodicity.}

The only gap cases that remain for QSDs are {(C11), (C20), (C21), where neither existence of a QSD nor  exponential ergodicity of the $Q$-process are known to occur, provided one of the three conditions hold.}

\subsection{Implosivity}\label{subsec-implosive}

Assume $\partial=\varnothing$. Then, $Y_t$ is irreducible {on $\cX$.} Let $B\subsetneq\cX$ be a non-empty proper subset. Then $Y_t$ {\em implodes towards $B$}
\cite{MP14} if there exists $t_*>0$, such that  
\[\E_y(\tau_B)\le t_*,\quad \forall y\in \cX \setminus B.\]
Implosion towards a single state $x\in\cX$ implies finite expected first return time {to the state}, and thus positive recurrence of $x$. Indeed,
\[\E_x(\tau^+_x)\le \E_x(J_1)+\sup\nolimits_{\{y\colon y\neq x\}}\E_y(\tau_x)<\infty,\]
where $\tau_x^+=\tau_{\{x\}}^+$, $\tau_x=\tau_{\{x\}}$, and $J_1$ has finite expectation since $x$ is not absorbing.
Hence, $Y_t$ does not implode towards any transient or null recurrent state.

The process $Y_t$ is {\em implosive} if $Y_t$ implodes towards any state of $\cX$, and otherwise, $Y_t$ is {\em non-implosive}. Hence, implosivity implies positive recurrence.
If $Y_t$ implodes towards a finite non-empty subset of $\cX$, then $Y_t$ is implosive (see Proposition~\ref{Spro-10}).

\thmb\label{th-10}
Assume $\rm(\mathbf{A1})$-$\rm(\mathbf{A5})$, $\partial=\varnothing$, and that
$\Omega$ is finite. Then, $Y_t$ is implosive, and there exists $\epsilon>0$, such that for every non-empty finite subset $B\subseteq\cX$ and every $x\in \cX\setminus B$,
$$\E_x(\exp\lt(\tau_B^{\epsilon}\rt))<\infty,$$
if either {(C18) or (C19) holds,} while $Y_t$ is non-implosive otherwise.
\thme

\begin{theorem}
  \label{cor-infinite-implosivity}
Assume $\rm(\mathbf{A1})$-$\rm(\mathbf{A5})$, $\partial=\varnothing$,
 and that $\cT$ is infinite. Then, $Y_t$ is implosive if {(C19)  holds.}
\end{theorem}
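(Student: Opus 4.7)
The plan is to adapt the linear Lyapunov function used in the proof of Theorem~\ref{th-10} to the infinite jump size setting, relying on assumption~($\rm\mathbf{A3}$) to ensure the generator's action on $V$ is well-defined, and on the existence of the limit defining $\alpha$ to extract the right asymptotic drift.

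First I would take the Lyapunov function $V(x)=x$ on $\cX$. Its generator action reads
\[
\mathcal{L}V(x)=\sum_{\omega\in\cT}\lambda_{\omega}(x)\,\omega,
\]
which is absolutely convergent at every $x\in\cX$ by ($\rm\mathbf{A3}$). By the very definition of $\alpha$ and $R$ in Section~\ref{sect-preliminary} (guaranteed finite under ($\rm\mathbf{A1}$)-($\rm\mathbf{A5}$)), we have the asymptotics $\mathcal{L}V(x)=\alpha x^R+\ro(x^R)$ as $x\to\infty$. Under (C19), i.e.\ $\alpha<0$ and $R>1$, this yields constants $c>0$ and $x_0\in\N$ such that
\[
\mathcal{L}V(x)\le -c\,x^R=-c\,V(x)^R,\qquad \forall\, x\in\cX,\ x\ge x_0.
\]

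Next, I would invoke a continuous-time Foster-Lyapunov criterion for implosion, of the same flavor used in the proof of Theorem~\ref{th-10} and in \cite{MP14}: if a nonnegative unbounded function $V$ on $\cX$ satisfies $\mathcal{L}V\le -h(V)$ outside a finite set $B$, for some positive nondecreasing $h\colon\R_{>0}\to\R_{>0}$ with $\int^{\infty}\!ds/h(s)<\infty$, then $\sup_{x\in\cX\setminus B}\E_x(\tau_B)<\infty$, so $Y_t$ implodes towards $B$. With $h(s)=c\,s^R$ one has $\int^{\infty}\!ds/h(s)=\tfrac{1}{c(R-1)}<\infty$ exactly because $R>1$, and $B=\cX\cap\{0,1,\ldots,x_0-1\}$ is finite since $\cX\subseteq\N_0$. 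Finally, Proposition~\ref{Spro-10} upgrades implosion towards a finite set to implosivity of $Y_t$ on all of $\cX$.

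The main obstacle is conceptual: checking that the key asymptotic $\mathcal{L}V(x)=\alpha x^R+\ro(x^R)$ still holds when $\cT$ is infinite, since one can no longer simply sum a finite list of polynomials as in the proof of Theorem~\ref{th-10}. Here one has to work directly from the existence of the limit that defines $\alpha$, combined with ($\rm\mathbf{A3}$) to dominate the infinite tail $\sum_{|\omega|>N}\lambda_{\omega}(x)|\omega|$ uniformly in $x$ on compacta. Once this asymptotic is in place, the Foster machinery applies verbatim with the same Lyapunov function as for Theorem~\ref{th-10}, and delivers implosivity; note that we do not expect to inherit the exponential moment bound $\E_x(\exp(\tau_B^{\epsilon}))<\infty$ of Theorem~\ref{th-10}, since in the infinite-$\cT$ regime one generally loses control of the exponential-moment Lyapunov estimates.
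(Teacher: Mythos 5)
Your argument is correct in substance and lands on the same basic mechanism as the paper, but it is packaged through a different lemma. The paper's proof (one line, deferring to the proofs of Theorems~\ref{th-10} and \ref{cor-infinite-recurrence}) takes the \emph{bounded} Lyapunov function $f(x)=1-(x+1)^{-1}$ and verifies $Qf(x)\le-\epsilon$ for large $x$ directly, so that Proposition~\ref{Spro-6}(i-1) and then Proposition~\ref{Spro-10} apply; the infinite positive tail of $\cT$ is handled by the one-sided bound $f(x+\om)-f(x)\le f'(x)\om$ coming from concavity of $x\mapsto -(x+1)^{-1}$. You instead take the unbounded $V(x)=x$ and invoke a nonlinear drift criterion ($\mathcal{L}V\le -h(V)$ with $\int^{\infty}ds/h(s)<\infty$ implies uniformly bounded $\E_x(\tau_B)$). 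That criterion is \emph{not} among the propositions quoted in the paper, so as written there is a small citation gap; however, it reduces to Proposition~\ref{Spro-6}(i-1) in one line by setting $f=G\circ V$ with $G(v)=\int_{1}^{v}ds/h(s)$, which is bounded (since $R>1$) and concave (since $h$ is nondecreasing), whence $Qf(x)\le G'(x)\mathcal{L}V(x)\le -1$ outside a finite set — and for $h(s)=cs^{2}$ this $f$ is, up to constants, exactly the paper's $1-(x+1)^{-1}$. Two further remarks: the asymptotic $\mathcal{L}V(x)=\alpha x^{R}+\rO(x^{R-1})$ that you flag as the main obstacle is already settled by Proposition~\ref{Spro-1}, which shows $\sum_{\om\in\cT}\lambda_{\om}(x)\om$ is an honest polynomial of degree at most $R$ for large $x$ even when $\cT$ is infinite; and your closing hedge is unnecessary — once the bounded-$f$ criterion of Proposition~\ref{Spro-6}(i-1) is verified, the equivalence with (i-2) hands you exponential moments of $\tau_B$ for free, also in the infinite-$\cT$ regime.
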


\subsection{{Relations between absorbed CTMCs and non-absorbed CTMCs}}
{It is worth comparing the properties of  absorbed CTMCs to those of  non-absorbed CTMCs, as first discussed by Karlin and McGregor \cite{KM57}. Indeed, in the case of a finite set of absorbing states $\partial\neq\varnothing$ and an irreducible state space $\cX\setminus \partial$,  one can add positive transition rates  to the transition matrix from the states in $\partial$ to a finite subset of states in $\cX\setminus\partial$, such that $\cX$ is irreducible for the new chain. Conversely, if $\partial=\varnothing$, then one can prescribe a finite set $\partial'\subseteq\cX$, and delete all transitions from $\partial'$ to $\cX'=\cX\setminus\partial$, such  that $\cX'$ is irreducible and $\partial'$ an absorbing set for the new chain. {These operations  can be viewed as simple extensions to those of \cite{KM57}, proposed in the context of BDPs}. As the dynamical properties we discuss  generically are determined by transitions among states of large values, the  operations provide a way to link the  dynamics of an absorbed CTMC with that of a corresponding non-absorbed CTMC, and  \emph{vice versa}.}

{ Figure \ref{fig:implications} shows the implications among the properties, in agreement with the parameter conditions derived in the main theorems. In Examples ~\ref{ex-QSD} and \ref{Ex-implosivity} below, counterexamples are given.
It remains unknown  whether {exponential ergodicity of  the $Q$-process implies implosivity, and whether ergodicity implies existence of QSD, see  Figure \ref{fig:implications}.}}

\newcommand*{\nNWarrow}{\rotatebox[origin=c]{150}{\(\nRightarrow\)}}

\color{black}
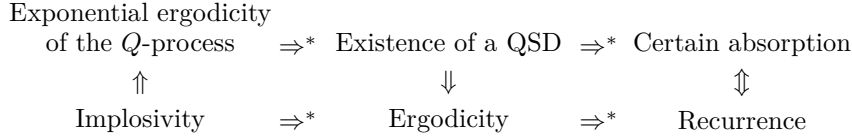
\begin{figure}\label{fig:implications}
\begin{center}
\begin{tikzpicture}[scale=0.5]
\node   at (0,.8) {Exponential ergodicity};
\node   at (0,0) {of the $Q$-process};
\node   at (0,-2) {Implosivity};
\node   at (0,-1) {$\Uparrow$};
\node   at (4.2,0) {$\Rightarrow^*$};
\node   at (4.2,-2) {$\Rightarrow^*$};
\node   at (8.2,-2) {Ergodicity};
\node   at (8.2,0) {Existence of a QSD};
\node   at (8.2,-1) {$\Downarrow$};
\node   at (12.2,0) {$\Rightarrow^*$};
\node   at (12.2,-2) {$\Rightarrow^*$};
\node   at (16,-2) {Recurrence};
\node   at (16,0) {Certain absorption};
\node   at (16,-1) {$\Updownarrow$};

\end{tikzpicture}
\end{center}
\caption{{Implications among properties. Ergodicity and recurrence refer to the non-absorped process; exponential ergodicity of the $Q$-process, existence of a QSD and certain absorption to the absorped process; and implositity to both processes. Implications that cannot be extended to biimplications are marked by an asterix $^*$. These are justified by  counterexamples in Examples~\ref{ex-QSD} and \ref{Ex-implosivity}, except for the implication Ergodicity $\Rightarrow$ Recurrence. Its  reverse implication fails according to Theorem~\ref{th-9}(i). Ergodicity does not imply exponential ergodicity of the {Q-process}, see also  Examples~\ref{ex-QSD}(i).}}\label{fig:implications}
\end{figure}
\color{black}

\eb\label{ex-QSD}
(i) Consider the sublinear {BDP on $\N_0$ } with birth rates $\lambda_j=a$ and death rates $\mu_j=b$ for $j\in\N$. We have $\partial=\{0\}$, $R=0$, and $\alpha=a-b$. Hence, the process is non-explosive  for any initial state by Theorem~\ref{th-7}. By \cite{V91}, the process has certain absorption with  decay parameter $\psi_0=(\sqrt{a}-\sqrt{b})^2$, when $a\le b$, and it admits a continuum family of QSDs, when $\alpha<0$. {This shows that existence of QSD does not imply exponential ergodicity of  {the Q-process} in general.}

(ii) Consider the linear {BDP on $\N_0$} with birth rates $\lambda_j=aj$ and death rates $\mu_j=bj$ for $j\in\N$. Assume $a\le b$. We have,  $\partial=\{0\}$, $R=1$, and $\alpha=a-b$. Hence, the process is non-explosive for any initial state by Theorem~\ref{th-7}. By \cite{V91}, the process has certain absorption with decay parameter $\psi_0=(\sqrt{a}-\sqrt{b})^2$, when $a\le b$, and it admits no QSDs for $\alpha=0$ {(hence, $\beta=-a<0=\gamma$)}, while a continuum family of QSDs for $\alpha<0$. {This shows that the process has certain absorption, but no QSDs for $\alpha=0$, which is also justified by Theorem~\ref{th-9}(ii). Moreover, it also shows that ergocidity of the non-absorbed process does not imply   uniqueness of a QSD  or  exponential  ergodicity of the $Q$-process.}

(iii) Consider the superlinear {BDP on $\N_0$} with birth rates $\lambda_j=j^2$ and death rates $\mu_j=j^2$ for $j\in\N$. We have $\partial=\{0\}$, $R=2$, $\alpha=0$, and $\beta=-1<0$. {Hence, the process is non-explosive and has certain absorption by Theorem~\ref{th-7} and Theorem~\ref{th-8}(ii). By \cite{V91}, the process admits either no QSDs or a continuum family of QSDs. This shows that certain absorption does not imply exponential ergodicity of  the Q-process}. 
\ee

Implosivity is  indeed {a stronger property} than positive recurrence (e.g., when $R\le1$, $\alpha<0$), as shown in the following example (see also Table~1).

\eb\label{Ex-implosivity} Let $Y_t$ be an irreducible BDP on $\N_0$ with $\cT=\{1,-1\}$ and
$$\lambda_{-1}(x)=x,\quad \lambda_1(x)=1,\quad x\in\N_0.$$
In this case, $R=1$ and $\alpha=-1$. By Theorems~\ref{th-9} and \ref{th-10}, $Y_t$ is positive recurrent and admits an ergodic stationary distribution, but $Y_t$ is non-implosive.
\ee

\section{Applications}\label{sec5}

\subsection{Stochastic reaction networks}\label{sec:RN}
{A \emph{reaction network} $(\mathcal{C},\cR)$ on a finite set $\cS=\{S_1,\ldots,S_n\}$ (with elements called   \emph{species}) is an edge-labelled finite digraph with node set $\mathcal{C}$ (with elements called \emph{complexes}) and edge set $\cR$ (with elements called \emph{reactions}), such that the elements of $\mathcal{C}$ are non-negative linear combinations of species, $y=\sum_{i=1}^ny^i S_i$,  identified with vectors $y=(y^1,\ldots,y^n)$ in $\N_0^n$.  Reactions are directed edges between complexes,  written as $y\to y'$. We assume every species has a positive coefficient in some complex, and that every complex is in some reaction. Hence, the reaction network might be deduced from the reactions alone and it is custom just to list (or draw) the reactions. If $n=1$, the reaction network is a one-species reaction network.}

{A {\em stochastic reaction network} (SRN)  is a reaction network together with a  CTMC $X(t), t\ge 0$, on {$\N_0^n$}, modeling the number of molecules of each species over time. A reaction $  y\to y'$ \emph{fires}  with transition rate $\eta_{y\to y'}(x)$,
in which case the chain jumps from $X(t)=x$ to $x+y'-y$ \cite{AK15}.
The Markov process with transition rates $\eta_{y\to y'}\colon\N^n_0\to \R_{\geq 0}$, ${y\to y'}\in\cR$, has  $Q$-matrix
$$q_{x,x+\omega}=\sum_{y \to y'\in \cR\colon y'-y=\omega}\eta_{y\to y'}(x).$$
Hence, the transition rate from the state $x$ to $x+\omega$ is 
 $$\lambda_{\omega}(x)=\sum_{y \to y'\in \cR\colon y'-y=\omega}\eta_r(x).$$
For (stochastic) mass-action kinetics, the transition rate for $y\to y'$ is
\begin{equation*}
\label{int}
\eta_{y\to y'}(x)=\ka_{y\to y'}\tfrac{(x)!}{(x-y)!}1_{\{x'\colon x'\geq y\}}(x),\quad x\in\N^n_0 
\end{equation*}
(in accordance with the transition rate introduced in the Introduction for one-species reaction networks), where $x!:=\prod_{i=1}^nx_i!$,
and  $\ka_{y\to y'}$ is a positive reaction rate constant  \cite{AK11,AK15}. Generally, we number the reactions and write $\ka_1,\ka_2,\ldots$ for convenience.
}

{In this section, we apply the results developed in Section~\ref{sect-criteria} to some examples of SRNs.}

\eb Consider the following two reaction networks:
{\[(A)\quad \varnothing \ce{<=>[\ka^A_1][\ka^A_2]} \tS,\qquad \text{and}\qquad(B)\quad \varnothing \ce{<=>[\ka^B_1][\ka^B_2]} \tS,\quad 2\tS \ce{->[\ka^B_3]}3\tS,\]}
with $\Omega=\{-1,1\}$ in both cases and with transition rates
{\begin{align*}
\lambda^A_{-1}(x)&=\ka^A_2 x,\qquad \lambda^A_{1}(x) =\ka^A_1,\\
\lambda^B_{-1}(x)&=\ka^B_2 x,\qquad \lambda^B_{1}(x)=\ka^B_1+\ka^B_3x(x-1),
\end{align*}}
respectively.
 By Theorem~\ref{th-9}, the first is positive recurrent and admits an exponentially ergodic stationary distribution on $\N_0$ since {$\alpha=-\ka^A_2$} and $R=1$, while by  Theorem~\ref{th-7}, the second reaction network is explosive for any initial state since {$\alpha=\ka^B_3>0$} and $R=2$. Indeed, these two reaction networks are \emph{structurally equivalent} in the sense that there is only one irreducible component $\N_0$  \cite{XHW20a}.
\ee

\eb
Consider the following pair of SRNs from the introduction:
\eqb\label{Eq-1}
\text{S}\ce{<=>[1][2]} 2\text{S}\ce{<=>[4][4]} 3\text{S}\ce{<=>[6][1]} 4\text{S}\ce{->[1]} 5\text{S},\qquad \text{S}\ce{<=>[1][2]}  2\text{S}\ce{<=>[3][1]}  3\text{S}\ce{->[1]}  4\text{S}.
\eqe
For the first reaction network, $R=4$, $\alpha=0$ and $\beta=1$, and for the second, $R=3$, $\alpha=0$ and $\beta=0$. By Theorem~\ref{th-7}, the first is explosive  for any initial state and the second does not explode for any initial state.
\ee

\eb
{\rm(i)} Consider a strongly connected reaction network:
\[
\begin{tikzpicture}[node distance=3.5em, auto, scale=1]
%nodes
 \tikzset{
    %Define standard arrow tip
    >=stealth',
    % Define arrow style
    pil/.style={
           ->,
           thick,
           shorten <=2pt,
           shorten >=2pt,}
}
 \node[] (a) {};
  \node[right=1.3em of a] (n1) {};
  \node[above=-.5em of n1] (m1) {$\ka_1$};
\node[right=of a] (b) {2S};
\node[right=1.3em of b] (n2) {};
  \node[above=-.5em of n2] (m2) {$\ka_2$};
  \node[above=1.3em of b] (m3) {$\ka_3$};
  \node[left=of b] (aa) {S} edge[pil, black, bend left=0] (b);
\node[right=of b] (c) {3S};
 \node[left=of c] (bb) {} edge[pil, black, bend left=0] (c);
\node[right=of b] (cc) {} edge[pil, black, bend right=40] (aa);
\end{tikzpicture}\]
For the underlying CTMC $Y_t$, $\cT=\{1,-2\}$, and \[\lambda_1(x) =\ka_1 x+\ka_2x(x-1),\quad \lambda_{-2}(x)=\ka_3 x(x-1)(x-2).\]
 Hence $Y_t$ is irreducible on $\N$ with $0$ a neutral state. % \cite{XHW20a}.
Moreover, $R=3$, and $\alpha=-2\ka_3$. By Theorem~\ref{th-9}, there exists a unique exponentially ergodic stationary distribution on $\N$.

{\rm(ii)} Consider a similar reaction network including direct degradation of S:
\[
\begin{tikzpicture}[node distance=3.5em, auto, scale=1]
%nodes
 \tikzset{
    %Define standard arrow tip
    >=stealth',
    % Define arrow style
    pil/.style={
           ->,
           thick,
           shorten <=2pt,
           shorten >=2pt,}
}
\node[] (a) {};
\node[right=1.3em of a] (n1) {};
\node[left=of a] (d) {$\varnothing$};
\node[right=1.3em of d] (n4) {};
\node[above=-.5em of n4] (m4) {$\ka_{4}$};
\node[right=1.3em of b] (n2) {};
\node[above=-.5em of n1] (m1) {$\ka_1$};
\node[right=of a] (b) {2S};
\node[right=1.3em of b] (n2) {};
\node[above=-.5em of n2] (m2) {$\ka_2$};
\node[above=1.3em of b] (m3) {$\ka_3$}; \node[left=of b] (aa) {S} edge[pil, black, bend left=0] (d);
  \node[left=of b] (aa) {S} edge[pil, black, bend left=0] (b);
\node[right=of b] (c) {3S};
 \node[left=of c] (bb) {} edge[pil, black, bend left=0] (c);
\node[right=of b] (cc) {} edge[pil, black, bend right=40] (aa);
\end{tikzpicture}\]
The threshold parameters are the same as in {\rm(i)}. Let $\partial=\{0\}$ with $\N_0\setminus\partial=\N$
an irreducible component. By Theorems~\ref{th-9}, the network has a uniformly exponentially ergodic QSD on $\N$.
\ee

\subsection{An extended class of branching processes}

Consider an extended class of branching processes \cite{C97} with transition rate matrix $Q=(q_{x,y})_{x,y\in\N_0}$:
\eqb\label{Eq-5}q_{x,y}=\left\{\begin{array}{cl}
r(x)\mu(y-x+1), &\quad  \text{if}\quad  y\ge x-1\ge0\quad \text{and}\quad y\neq x,\\
 -r(x)(1-\mu(1)), &\quad  \text{if}\quad y=x\ge1,\\
 q_{0,y}, & \quad \text{if}\quad y>x=0,\\
 -q_0, &\quad \text{if}\quad y=x=0,\\
 0, &\quad  \text{otherwise},
\end{array}\right.\eqe
where $\mu$ is a probability measure on $\N_0$, $q_0=\sum_{y\in\N}q_{0,y}$, and $r(x)$ is a positive finite function on $\N_0$.
Assume

\medskip
\noindent($\rm\mathbf{H1}$) $\mu(0)>0$, $\mu(0)+\mu(1)<1$.

\medskip
\noindent($\rm\mathbf{H2}$) $\sum_{y\in\N}q_{0,y}y<\infty$, ${E}
=\sum_{k\in\N_0}k\mu(k)<\infty$.
\medskip

\noindent($\rm\mathbf{H3}$) $r(x)$ is a polynomial of degree $R\ge1$ for large $x$.
\medskip

The next theorem follows from the results in Section~\ref{sect-criteria}. We would like to mention that the results below provide conditions for different dynamical regimes in terms of only $R$ and $M$. In contrast, the condition for positive recurrence in \cite{C97} also depends on the integrability of a definite integral  as well
as summability of a series, which nonetheless never appear.
\thmb\label{th-branching}
Assume ${\rm(\mathbf{H1})}$-${\rm(\mathbf{H3})}$. Let $Y_t$ be a process generated by the $Q$-matrix given in \eqref{Eq-5} and $Y_0\neq0$. Then $Y_t$ is non-explosive if  one of the following conditions holds: (1) $R\le1$, (2) ${E}<1$, (3) $R=2$, ${E}=1$, while explosive with positive probability if (4) ${E}>1$, $R>1$. Furthermore,
\enb
\item[(i)] if $q_0>0$, then $Y_t$ is irreducible on $\N_0$ and is
\enb
\item[(i-1)] recurrent if ${E}<1$, and transient if ${E}>1$.
\item[(i-2)] positive recurrent and exponentially ergodic if ${E}<1$.
\item[(i-3)] implosive if $R>1$ and ${E}<1$.
\ene
\item[(ii)] if $q_0=0$, then $\partial=\{0\}$, and $Y_t$ has certain absorption if ${E}<1$, while it has not if ${E}>1$. Moreover, the process admits no QSDs if ${E}>1$, while it admits a uniformly exponentially ergodic QSD on $\N$ if $R>1$ and ${E}<1$.
\ene
\thme

\prb
For all $k\in\N\cup\{-1\}$, let
$$\lambda_k(x)=\cab r(x)\mu(k+1),\quad \text{if}\ x\in\N,\\ q_{0k},\qquad\qquad\quad\, \text{if}\ x=0.\cae$$
 By ${\rm(\mathbf{H1})}$, $\mu(k)>0$ for some $k\in\N$. Note that $\cX\setminus\partial$ is irreducible, with $\partial=\varnothing$ if $q_0>0$ and  $\partial=\{0\}$ if $q_0=0$. 
Hence regardless of $q_0$, by positivity of $r$, ${\rm(\mathbf{A1})}$-${\rm(\mathbf{A2})}$ are  satisfied with $\cT_-=\{-1\}$ and $\cT_+=\{y\colon q_{0y}>0\}\cup(\supp\mu\setminus\{0,1\}-1)$. Moreover, ${\rm(\mathbf{H2})}$-${\rm(\mathbf{H3})}$ imply ${\rm(\mathbf{A3})}$-${\rm(\mathbf{A4})}$. Let $r(x)=ax^R+bx^{R-1}+\rO(x^{R-2})$ with $a>0$. Since $R\ge1$, it  coincides with $\max\{\deg(\lambda_{\omega})\colon \omega\in\cT\}$. It is straightforward to verify that
$$\alpha=a({E}-1),\quad \beta=(\tfrac{1}{2}a+b)({E}-1)-\tfrac{1}{2}a{E'}, \text{and}\quad \gamma=b({E}-1),$$
where  ${E'}=\sum_{k\in\N}k(k-1)\mu(k)>0$. Hence $\alpha$ has the same sign as ${E}-1$, and $\beta<0$ whenever ${E}=1$ (or equivalently, $\alpha=0$). Furthermore, $\alpha=0$ implies $\gamma=0$.  In addition, in the light of $R\ge1$, the condition ${E}\ge 1$ decomposes into three possibilities:
$${E}>1,\quad \text{or}\quad {E}=1, R>1,\quad \text{or}\quad {E}=R=1.$$
  Then the conclusions follow directly from Theorems~\ref{th-7}, \ref{th-8}, \ref{th-9} and \ref{th-10}.
\pre

\begin{corollary}
Assume ${\rm(\mathbf{H1})}$-${\rm(\mathbf{H3})}$ and $\mu$ has finite support and $\{y\colon q_{0y}>0\}$ is finite. Then  $Y_t$  is non-explosive if and only if either $R=1$ or ${E}\le1$. Furthermore, \enb
\item[(i)] if $q_0>0$, then $Y_t$ is irreducible and is
\enb
\item[(i-1)] recurrent if ${E}\le1$, and transient otherwise.
\item[(i-2)] positive recurrent if and only if ${E}<1$, or ${E}=1$, $R>1$, while null recurrent if and only if ${E}=R=1$. Furthermore, $Y_t$ is exponentially ergodic if ${E}<1$, or ${E}=1$, $R>2$.
\item[(i-3)] implosive if and only if either of the two conditions  $R>1$, ${E}<1$, or $R>2$, ${E}=1$ holds.
\ene
\item[(ii)] if $q_0=0$, then $\partial=\{0\}$, and $Y_t$ has certain absorption if and only if ${E}\le1$. Moreover, the process admits no QSDs if ${E}>1$, or ${E}=R=1$, while it admits a uniformly exponentially ergodic QSD on $\N$ if either $R>1$, ${E}<1$, or $R>2$, ${E}=1$.
\ene
\end{corollary}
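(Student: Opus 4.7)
The plan is to derive the corollary as a direct corollary of Theorem~\ref{th-branching} combined with the main classification theorems in Section~\ref{sect-criteria}, exploiting the fact that the finiteness assumptions render $\Omega$ finite. Indeed, since $\supp\mu$ and $\{y\colon q_{0,y}>0\}$ are finite, the jump set $\Omega=\{-1\}\cup\{y\colon q_{0,y}>0\}\cup(\supp\mu\setminus\{0,1\}-1)$ is finite, so the finite-$\Omega$ versions of the threshold criteria (Theorems~\ref{th-7}, \ref{th-8}, \ref{th-9}, \ref{th-10}) are available. The regularity assumptions $(\mathbf{A1})$–$(\mathbf{A5})$ were already verified in the proof of Theorem~\ref{th-branching}, so no additional verification is required.

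The second step is to reuse the parameter computation from the proof of Theorem~\ref{th-branching}:
\[
\alpha=a(E-1),\qquad \gamma=b(E-1),\qquad \beta=\bigl(\tfrac12 a+b\bigr)(E-1)-\tfrac12 aE',
\]
with $a>0$ and $E'>0$. The key simplification is that when $\alpha=0$ (equivalently $E=1$), one automatically has $\gamma=0$ and $\beta=-\tfrac12 aE'<0$. Hence, among the twenty-one conditions in Table~\ref{tab:conditions}, any condition requiring $\beta>0$, $\beta=0$, $\gamma<0$ or $\gamma>0$ simultaneously with $\alpha=0$ is vacuous for this class of processes; in particular (C2), (C8), (C10), (C11), (C12), (C16) cannot occur. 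The parameter landscape therefore collapses to being controlled by the pair $(E,R)$ alone.

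The third step is to read off each claim from the appropriate theorem. Explosivity: Theorem~\ref{th-7} yields explosivity iff (C1) or (C2), which reduces to ``$E>1$ and $R>1$'', giving the non-explosivity characterisation $E\le 1$ or $R=1$. Recurrence/certain absorption: Theorem~\ref{th-8} yields recurrence iff (C3) or (C6), which reduces to $E\le 1$. Positive/null recurrence: Theorem~\ref{th-9}(i) says positive recurrence occurs iff one of (C3), (C9), (C10), (C11) holds, which reduces to ``$E<1$, or $E=1$ and $R>1$''; the complementary recurrent case is exactly $E=R=1$, giving null recurrence. Exponential ergodicity follows from conditions (C17)–(C18), which reduce to ``$E<1$, or $E=1$ and $R>2$''. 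Implosivity follows from Theorem~\ref{th-10} applied to (C18)–(C19), giving ``$E<1$ and $R>1$'' or ``$E=1$ and $R>2$''. The QSD statements in the case $q_0=0$ (so that $\partial=\{0\}$) come from Theorem~\ref{th-8}(ii) and Theorem~\ref{th-9}(ii) in exactly the same way.

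The main task is not mathematical but combinatorial bookkeeping: one must patiently enumerate which of the twenty-one labelled conditions survive after imposing the rigidity $\beta<0,\ \gamma=0$ whenever $\alpha=0$, and then translate each surviving condition into a statement about $(E,R)$ with $R\ge 1$. There is no further analytic content beyond what is already encoded in the main theorems of Section~\ref{sect-criteria}. The uniqueness of the ergodic stationary distribution (respectively QSD) is automatic from the cited theorems once positive recurrence (respectively uniform exponential ergodicity) has been established on the irreducible component.
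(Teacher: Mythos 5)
Your proposal is correct and follows essentially the same route as the paper: reuse the verification of $(\mathbf{A1})$--$(\mathbf{A5})$ and the formulas $\alpha=a(E-1)$, $\beta=(\tfrac12 a+b)(E-1)-\tfrac12 aE'$, $\gamma=b(E-1)$ from the proof of Theorem~\ref{th-branching}, observe that the added finiteness hypotheses make $\Omega$ finite so that $\gamma=0$ and $\beta<0$ whenever $\alpha=0$, and read off each claim from the threshold criteria. If anything you are more careful than the paper's one-line proof, which nominally cites the infinite-$\Omega$ statements even though the ``if and only if'' conclusions require the finite-$\Omega$ versions (Theorems~\ref{th-7}, \ref{th-8}, \ref{th-9}, \ref{th-10}) that you correctly invoke.
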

\begin{proof}
  Based on the proof of Theorem~\ref{th-branching}, the conclusions follow from Corollaries~\ref{cor-infinite-explosivity}, \ref{cor-infinite-recurrence}, \ref{cor-infinite-ergodicity} and \ref{cor-infinite-implosivity}.
\end{proof}

The extended branching process under more general assumptions (allowing more general forms of $r$) is addressed in \cite{C97}. In that reference, the conditions given for the dynamic behavior of the process seem more involved than here and even become void in some situations (e.g., in \cite[Corollary~1.5(iii)]{C97}, where the definite integral  indeed is always  infinite  under ($\rm\mathbf{H1}$)-($\rm\mathbf{H3}$).)

\subsection{A general single-cell stochastic gene expression model}\label{subsec-gene}

To model single-cell stochastic gene expression with bursty production, we propose the following one-species \emph{generalized} reaction network (consisting potentially of infinitely many reactions) with mass-action kinetics:
\begin{equation}\label{Eq-3}
m\tS\ce{->[c_m\mu_m(k)]}(m+k)\tS,\quad m=0,\ldots,J_1,\qquad m\tS\ce{->[r_m]}(m-1)\tS,\quad m=1,\ldots,J_2,\end{equation}
where $c_m\ge0$ for $m=0,\ldots,J_1$, $r_m\ge0$ for $m=1,\ldots,J_2$, $J_1\in\N_0,\ J_2\in\N$, and  $\mu_m$, for $m=0,\ldots,J_1$, are probability distributions on $\N$. Assume

\medskip
\noindent ${\rm(\mathbf{H4})}$ $J_1\le J_2$, $c_0>0$, $c_{J_1}>0$, $r_1>0$, and $r_{J_2}>0$.
\medskip

\noindent ${\rm(\mathbf{H5})}$
${E}_m=\sum_{k=1}^{\infty}k\mu_m(k)<\infty$, for $m=0,\ldots,J_1$.
\medskip

This network embraces several single-cell stochastic gene expression models in the presence of bursting, see e.g. \cite{BA16,CJ20,MTY13,SS08}. Ergodicity as well as an  exact formula for the ergodic stationary distribution (when it exists) are the main concerns of these references. The first set of $J_1$ reactions
 account for  bursty production of mRNA copies with transcription rate $c_m$ and burst size distribution $\mu_m$.
The second set of $J_2$ reactions
  account for degradation of  mRNA with degradation rate $r_m$ \cite{CJ20,SS08}.

 The network \eqref{Eq-3} reduces to the specific model studied in
\enb
\item[$\bullet$] \cite[Section~4]{CJ20} (see also \cite[Section~3.2]{MTY13}), when $J_1=0$, $J_2=1$, and $\mu_0$ is a geometric distribution.
\item[$\bullet$] \cite{SRB12}, when $J_1=0$, $J_2=1$, and $\mu_0$ is a negative binomial distribution.
\item[$\bullet$] \cite[Example~3.6]{MTY13}, when $J_1=J_2=1$, and $\mu_0=\mu_1$ are geometric distributions.
\item[$\bullet$] \cite{FMD17} when $J_1=2$, $J_2=3$, $\mu_0=\delta_1$, $\mu_2=\delta_k$ for some $k\in\N$, and $c_1=r_2=0$. Here $\delta_i$ is the Dirac delta measure at $i$.
\ene

\thmb\label{th-gene}
Assume ${\rm(\mathbf{H4})}$-${\rm(\mathbf{H5})}$, and that $\mu_m$ has finite support whenever $c_m>0$ for $m=0,\ldots,J_1$. Then the process $Y_t$ associated with the network \eqref{Eq-3} is irreducible on $\N_0$, and is positive recurrent and there exists an ergodic stationary distribution on $\N_0$ if and only if one of the following conditions holds:
\enb
\item[(i)] $J_1<J_2$,
\item[(ii)] $J_1=J_2$ and $c_{J_2}{E}_{J_2}<r_{J_2}$,
\item[(iii)] $J_1=J_2>2$, $c_{J_2}{E}_{J_2}=r_{J_2}$, and $c_{J_2-1}{E}_{J_2-1}\le r_{J_2-1}+\frac{1}{2}c_{J_2}({E}_{J_2}+{E}'_{J_2})$,
\item[(iv)] $J_1=J_2=2$, $c_{J_2}{E}_{J_2}=r_{J_2}$, and $c_{J_2-1}{E}_{J_2-1}<r_{J_2-1}+\frac{1}{2}c_{J_2}({E}_{J_2}+{E}'_{J_2})$,
\item[(v)]  $J_1=J_2=1$, $c_{J_2}{E}_{J_2}=r_{J_2}$, and $c_{J_2-1}{E}_{J_2-1}<r_{J_2-1}$,
\ene
where ${E}'_m=\sum_{k=1}^{\infty}k^2\mu_m(k)$.
Moreover, the stationary distribution is exponentially ergodic if one of (i), (ii) and (iii) holds. Besides, the process $Y_t$ is implosive if and only if (iii) or $J_2>1$ with (i) or (ii).
\thme
\prb
We have  $\cT=\{-1\}\cup(\cup_{j=0}^{J_1}\supp\mu_j)$, and
$$\lambda_{-1}(x)=\sum_{j=1}^{J_2}r_jx^{\underline{j}},\quad \lambda_k(x)=\sum_{j=0}^{J_1}c_j\mu_j(k)x^{\underline{j}},$$
 for $k\in\N$ and $x\in\N_0$, where $x^{\underline{j}}=\prod_{i=0}^{j-1}(x-i)$ is the descending factorial. By ${\rm(\mathbf{H4})}$, ${\rm(\mathbf{A1})}$-${\rm(\mathbf{A2})}$ are satisfied; moreover, the irreducibility of $Y_t$ also follows from \cite{XHW20a}. Under ${\rm(\mathbf{H5})}$, the mass-action kinetics yield  ${\rm(\mathbf{A3})}$-${\rm(\mathbf{A4})}$. Since $J_1\le J_2$ by ${\rm(\mathbf{H4})}$, we have $R=J_2\ge1$. Since
 $$\sum_{\om\in\cT}\lambda_{\om}(x)\om=-\sum_{j=J_1+1}^{J_2}r_jx^{\underline{j}}+\sum_{j=1}^{J_1}(c_j{E}_j-r_j)x^{\underline{j}}+c_0{E}_0,$$
  $$\sum_{\om\in\cT}\lambda_{\om}(x)\om^2=\sum_{j=J_1+1}^{J_2}r_jx^{\underline{j}}+\sum_{j=1}^{J_1}(c_j{E}'_j+r_j)x^{\underline{j}}
+c_0{E}'_0,$$
we have $\alpha=c_{J_1}{E}_{J_1}\delta_{J_1,J_2}-r_{J_2}$, where $\delta_{i,j}$ is the Kronecker delta. When $\alpha=0$, we have
 $$J_1=J_2,\quad c_{J_2}{E}_{J_2}=r_{J_2},\quad \gamma=c_{J_2-1}{E}_{J_2-1}-r_{J_2-1},$$
 $$ \beta=c_{J_2-1}{E}_{J_2-1}-r_{J_2-1}-\frac{1}{2}c_{J_2}({E}_{J_2}+{E}'_{J_2}).$$
Condition (i)+(ii) is equivalent to $\alpha<0$; condition (iii) is equivalent to $R>2$, $\alpha=0$, $\beta\le0$; condition (iv) is equivalent to $R=2$, $\alpha=0$, $\beta<0$; condition (v) is equivalent to $R=1$, $\alpha=0$, $\gamma<0$. Then the conclusions follow from Theorems~\ref{th-9} and \ref{th-10}.
\pre

\begin{corollary}
Assume ${\rm(\mathbf{H4})}$-${\rm(\mathbf{H5})}$, and  that $\mu_m$ has an infinite support and $c_m>0$ for some $m=0,\ldots,J_1$. Then $Y_t$ is irreducible on $\N_0$, and is positive recurrent with an ergodic stationary distribution if one of  Theorem~\ref{th-gene}(i), Theorem~\ref{th-gene}(ii) and Theorem~\ref{th-gene}(v) holds. Moreover, the stationary distribution is exponentially ergodic if either Theorem~\ref{th-gene}(i) or Theorem~\ref{th-gene}(ii) holds. In addition, the process $Y_t$ is implosive if $J_2>1$.
\end{corollary}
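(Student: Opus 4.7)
The plan is to mirror the proof of Theorem~\ref{th-gene}, replacing Theorems~\ref{th-9} and \ref{th-10} with their infinite-$\cT$ counterparts, Theorems~\ref{cor-infinite-ergodicity} and \ref{cor-infinite-implosivity}. First I would verify that $(\mathbf{A1})$--$(\mathbf{A5})$ remain valid when $\cT$ is infinite. Conditions $(\mathbf{A1})$--$(\mathbf{A2})$ hold as in the proof of Theorem~\ref{th-gene} since $\cT_- = \{-1\}$ is unchanged. For $(\mathbf{A3})$, the mass-action expression $\sum_{k\ge 1}\lambda_k(x)k = \sum_{m=0}^{J_1}c_m E_m x^{\underline{m}}$ is finite by $(\mathbf{H5})$. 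For $(\mathbf{A4})$, note that for every $\omega \ge 1$, $\lambda_\omega(x) = \sum_{m=0}^{J_1}c_m\mu_m(\omega)x^{\underline{m}}$ is a polynomial of degree at most $J_1 \le J_2$, providing the uniform bound $M = J_2$ across all $\omega \in \cT$; irreducibility $(\mathbf{A5})$ on $\N_0$ follows from the network structure exactly as in Theorem~\ref{th-gene}.

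Next, the parameters $R = J_2$, $\alpha = c_{J_1}E_{J_1}\delta_{J_1,J_2} - r_{J_2}$ and (when $\alpha=0$) $\gamma = c_{J_2-1}E_{J_2-1} - r_{J_2-1}$ are computed precisely as in Theorem~\ref{th-gene}. Under Theorem~\ref{th-gene}(i) or (ii) we have $\alpha<0$, so condition (C3) is fulfilled; Theorem~\ref{cor-infinite-ergodicity}(i) then delivers positive recurrence together with a unique stationary distribution. Since $R = J_2 \ge 1$, condition (C17) is simultaneously in force, so the stationary distribution is exponentially ergodic.

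Condition~(v) of Theorem~\ref{th-gene} requires $J_1 = J_2 = 1$, $c_1 E_1 = r_1$, and (with the convention $r_0 := 0$) $c_0 E_0 < 0$; since $c_0 > 0$ by $(\mathbf{H4})$ and $E_0 > 0$ since $\mu_0$ is a probability measure on $\N$, this case is vacuous under $(\mathbf{H4})$--$(\mathbf{H5})$ and contributes no additional regime to the corollary. For implosivity under $J_2 > 1$, one of (i) or (ii) must be in effect (implosivity requires positive recurrence), hence $\alpha<0$ and $R = J_2>1$, matching condition (C19); Theorem~\ref{cor-infinite-implosivity} then yields implosivity.

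The main conceptual obstacle is that the infinite-$\cT$ theorems provide only sufficient, not necessary, conditions; consequently, unlike Theorem~\ref{th-gene}, the corollary does not furnish an if-and-only-if characterization, and the critical regimes corresponding to $\alpha = 0$ with $R > 1$ (i.e., analogues of cases (iii) and (iv)) are left untreated. The only routine technicality is verifying $(\mathbf{A3})$--$(\mathbf{A4})$ with $\cT$ infinite, which as noted above reduces to the integrability assumption $(\mathbf{H5})$ and the uniform polynomial degree bound inherited from mass-action kinetics.
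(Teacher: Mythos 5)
Your proof is correct and takes essentially the same route as the paper, which simply reuses the setup from the proof of Theorem~\ref{th-gene} and invokes Theorems~\ref{cor-infinite-ergodicity} and \ref{cor-infinite-implosivity} via conditions (C3), (C17) and (C19). Your explicit observation that case (v) is vacuous under $(\mathbf{H4})$ (since $\gamma=c_0E_0>0$ whenever $R=1$ and $\alpha=0$) is a useful clarification the paper leaves implicit, and it matters because Theorem~\ref{cor-infinite-ergodicity} does not cover the $\alpha=0$ regime.
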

\begin{proof}
  Based on the proof of Theorem~\ref{th-gene}, the conclusions follow directly from Corollaries~\ref{cor-infinite-ergodicity} and \ref{cor-infinite-implosivity}.
\end{proof}
\subsection{Stochastic populations under bursty reproduction}
Two stochastic population models with bursty reproduction are investigated in \cite{BA16}.

The first model is a Verhulst logistic population process with bursty reproduction. The process $Y_t$ is a CTMC on $\N_0$ with transition rate matrix $Q=(q_{x,y})_{x,y\in\N_0}$ satisfying:
\[q_{x,y}=\cab c\mu(j)x,\qquad \text{if}\ y=x+j,\ j\in\N,\\ \frac{c}{K}x^2+x,\quad \text{if}\ y=x-1\in\N_0,\\ 0,\qquad\qquad\ \text{otherwise},\cae\]
where $c>0$ is the  reproduction rate, $K\in\N$ is the typical population size in the long-lived
metastable state prior to extinction \cite{BA16}, and $\mu$ is the burst size   distribution. Assume

\medskip
\noindent ${\rm(\mathbf{H6})}$ ${E_b}=\sum_{k=1}^{\infty}k\mu(k)<\infty$.
\medskip

Approximations of the mean time to extinction and QSD are discussed in \cite{BA16} against various different burst size distributions  of finite mean (e.g., Dirac measure, Poisson distribution, geometric distribution, negative-binomial distribution). Nevertheless, the existence of QSD is not proved there. Here we prove the certain absorption and ergodicity of the QSD for this population model.

\thmb
Assume ${\rm(\mathbf{H6})}$. The Verhulst logistic model $Y_t$ with bursty reproduction has certain absorption. Moreover, there exists a uniformly exponentially ergodic QSD on $\N$ trapped to zero.
\thme
\prb
We have $\cT=\supp\mu\cup\{-1\}$, $\lambda_{-1}(x)=\frac{c}{K}x^2+x$, $\lambda_k(x)=c\mu(k)x$, for $k\in\N$ and $x\in\N$. Let $\partial=\{0\}$, and $\N_0\setminus\partial=\N$ is irreducible \cite{XHW20a}. Hence ${\rm(\mathbf{A1})}$-${\rm(\mathbf{A5})}$ are satisfied. Moreover, $R=2$, $\alpha=-\frac{c}{K}<0$, and thus the conclusions follow from Theorems~\ref{th-8} and \ref{th-9}, and Corollaries~\ref{cor-infinite-recurrence} and \ref{cor-infinite-ergodicity} for finite $\supp\mu$ and infinite $\supp\mu$, respectively.
\pre

The second model is a runaway model of a stochastic population including bursty pair reproduction \cite{BA16}. This model can be described as a generalized reaction network:
where $c$, $K$ and $\mu$ are defined as in the first model. The survival probability of this population model is addressed in \cite{BA16}. Nevertheless, it turns out that this model is explosive for any initial state.

\thmb
Assume ${\rm(\mathbf{H6})}$. The runaway model is explosive.
\thme
\prb
We have $\cT=\supp\mu\cup\{-1\}$, $\lambda_k(x)=\frac{c}{K}\mu(k)x(x-1)$, $\lambda_{-1}(x)=x$, for $k\in\N$ and $x\in\N$. Let $\partial=\{0,1\}$. Then $\N_0\setminus\partial=\N\setminus\{1\}$ is irreducible \cite{XHW20a}. Hence ${\rm(\mathbf{A5})}$ is valid. Moreover, it is easy to verify that  ${\rm(\mathbf{A1})}$-${\rm(\mathbf{A4})}$ are also satisfied. In addition, $R=2$, $\alpha=\frac{c}{K}{E_b}>0$, and thus the conclusions follow from Theorem~\ref{th-7} and Corollary~\ref{cor-infinite-explosivity} for finite $\supp\mu$ and infinite $\supp\mu$, respectively.
\pre

\section{{Proofs}}\label{sec:proofs}

\subsection{{Proof of Theorem \ref{th-7}}}
Hereafter, we use the notation  $[m,n]_1$ ($[m,n[_1$, etc.) for the set of consecutive integers from $m$ to $n$, with $m,n\in\N_0\cup\{+\infty\}$. The notation is adopted from \cite{XHW20a}.

Moreover, throughout the proofs, we assume without loss of generality that $\cX=\N_0$, and $\partial\subseteq\{0\}$ for the ease of exposition. {Indeed,   the dynamical properties of the CTMCs discussed in this paper  depend  only on the transition structure of the states $x\in\cX$ with large value of $x$. By assumption ($\rm\mathbf{A4}$)-($\rm\mathbf{A5}$)   all  jumps in $\cT$ are possible.} When $\partial\neq\varnothing$, it is standard to `glue' all states in $\partial$ to be a single state $0$, since $\partial$ is finite and the set of states in $\cX\setminus\partial$ one jump away from $\partial$ is also finite, due to ($\rm\mathbf{A2}$).

\smallskip

We prove the conclusions case by case.

\noindent{\rm(a)} Assume $\partial=\varnothing$. Then, $Y_t$ is irreducible on $\N_0$ and can directly apply the Propositions~\ref{Spro-3} and \ref{Spro-13} with appropriate Lyapunov functions to be determined.

\noindent{\rm(b)} Assume $\partial\neq\varnothing$. Let $Z_t$ be the irreducible CTMC on the state space $\cX\setminus\partial$ with $Z_0=Y_0$ and transition operator $\widetilde{Q}$ being $Q$ restricted to $\cX\setminus\partial$: $$\widetilde{q}_{x,y}=q_{x,y},\ \text{for all}\ x,\ y\in\cX\setminus\partial \ \text{and}\ x\neq y.$$ In the following, we show that $Z_t$ is explosive if and only if $Y_t$ is explosive, and hence case (b) reduces to case (a).  {This equivalence is not quite trivial. There is positive probability that starting from any non-absorbing state the chain will jump to an absorbing state in a finite number of steps. So we need to show that this will not happen with probability one. Otherwise, explosivity is not possible. } Assume first that $Z_t$ is explosive. Then $\widetilde{Q}v=v$ for some bounded non-negative non-zero $v$. Let $u_x=v_x\mathbbm{1}_{\cX\setminus\partial}(x),\ \forall x\in\cX$. It is straightforward to verify that $Qu=u$. By Proposition~\ref{Spro-R}, $Y_t$ is also explosive. Conversely, assume that $Y_t$ is explosive, then $Qu=u$ for some bounded non-negative non-zero $u$. Let $w=u|_{\partial}$, i.e., $w_x=u_x$ for all $x\in\partial$. Since $Q|_{\partial}$ is a lower-triangular matrix with non-positive diagonal entries, and $w\ge0$, it is readily deduced that $w=0$ by Gaussian elimination. This implies from $Qu=u$ that $\widetilde{Q}v=v$ with $v=u|_{\cX\setminus\partial}$. Hence $Z_t$ is explosive. To sum up, $Z_t$ is explosive if and only if $Y_t$ is explosive.

\smallskip

Based on the above analysis, it remains to prove the conclusions for case {\rm(a)} using Propositions~\ref{Spro-3} and \ref{Spro-13}.
We first prove the conclusions  assuming $\Omega$ is finite.

{\rm(i)}
We prove explosivity by Proposition~\ref{Spro-3}. Let the lattice interval $A=[0,x_0-\min\cT_-[_1$ for some $x_0>1$ to be determined. Since $\#\cT_-<\infty$, $A\subseteq\N_0$ is finite. Let $f$ be decreasing and bounded such that $f(x)=\mathbbm{1}_{[0,x_0[_1}(x)+x^{-\delta}\mathbbm{1}_{[x_0,\infty[_1}(x)$ for all $x\ge x_0$, with $\delta>0$ to be determined. Obviously, Proposition~\ref{Spro-3}{\rm(i)} is satisfied for the set $A$. Next we verify the conditions in Proposition~\ref{Spro-3}{\rm(ii)}. It is easy to verify by straightforward calculation that
$$Qf(x)<-\epsilon,\ \text{for all}\ x\in \N_0\setminus A,$$
where $\epsilon=\delta\alpha/2$ provided {(C1)} holds with $\delta<R-1$, or  $\epsilon=\delta\lt(\beta-\delta\vartheta\rt)/2$ provided {(C2)}  holds with $\delta<\min\{\beta/\vartheta,R-2\}$, and $x_0$ is chosen large enough. Since $\delta>0$ can be arbitrarily small, in either case, there exist $\delta$ and $\epsilon$ such that the conditions in Proposition~\ref{Spro-3} are fulfilled, and thus $\E_x\zeta<+\infty$ for all $x\in\cX$. In particular, $Y_t$ is explosive a.s..

{\rm(ii)} Now we prove non-explosivity using Proposition~\ref{Spro-13}. Let $f(x)=\log\log(x+1)$ and $g(x)=(|\alpha|+|\beta|+1)(x+M)$ for all $x\in\N_0$ with some $M>0$ to be determined. One can show that all the conditions in Proposition~\ref{Spro-13} are satisfied with some large constant $M>0$, provided neither {(C1)} nor {(C2)} holds.
Hence $Y_t$ is non-explosive.

\subsection{{Proof of Theorem ~\ref{cor-infinite-explosivity}}}

{We first prove explosivity under condition (C1).} Let $f$ be as {in the proof of Theorem~\ref{th-7}(i)}, and let
$$\alpha_-=\lim_{x\to\infty}\tfrac{\sum_{\om\in\cT_-}\lambda_{\om}(x)\om}{x^R}, \quad\alpha_+=\lim_{x\to\infty}\tfrac{\sum_{\om\in\cT_+}\lambda_{\om}(x)\om}{x^R}.$$
 Then $\alpha=\alpha_++\alpha_-$. Since $\alpha>0$, we have $R_+=R$ and there exists $\epsilon_0\in]0,1[$ such that $\alpha_-+(1-\epsilon_0)\alpha_+>0$. By Proposition~\ref{Spro-1}, there exists $N_0,\ u'\in\N$ such that
\[\frac{\sum_{\om\in\cT_+\cap[1,N_0]_1}\lambda_{\om}(x)\om}{\sum_{\om\in\cT_+}\lambda_{\om}(x)\om}\ge1-\epsilon_0,\quad \text{for all}\ x\ge u'.\]
By ($\rm\mathbf{A3}$), $\cT\setminus]N_0,\infty[_1$ is finite. Hence choosing $x_0\ge u'$ large, we have for all $x\in\cX\setminus A$,
\[\begin{split}
Qf(x)&=\sum_{\om\in\cT_-}\lambda_{\om}(x)((x+\om)^{-\delta}-x^{-\delta})+\sum_{\om\in\cT_+}\lambda_{\om}(x)((x+\om)^{-\delta}-x^{-\delta})\\
&\le\sum_{\om\in\cT_-}\lambda_{\om}(x)((x+\om)^{-\delta}-x^{-\delta})+\sum_{\om\in\cT_+\cap[1,N_0]_1}\lambda_{\om}(x)((x+\om)^{-\delta}-x^{-\delta})\\
&\le x^{-\delta}\sum_{\om\in\cT_-}\lambda_{\om}(x)(-\om\delta x^{-1}+\rO(x^{-2}))+x^{-\delta}\sum_{\om\in\cT_+\cap[1,N_0]_1}\lambda_{\om}(x)(-\om\delta x^{-1}+\rO(x^{-2}))\\
&=-\delta\alpha_-x^{R-1-\delta}+\rO(x^{R-2-\delta})-\delta x^{-1-\delta}\sum_{\om\in\cT_+\cap[1,N_0]_1}\lambda_{\om}(x)\om\\
&\le-\delta\alpha_-x^{R-1-\delta}+\rO(x^{R-2-\delta})-\delta (1-\epsilon_0)x^{-1-\delta}\sum_{\om\in\cT_+}\lambda_{\om}(x)\om\\
&=-\delta(\alpha_-+(1-\epsilon_0)\alpha_+)x^{R-1-\delta}+\rO(x^{R-2-\delta})<-\epsilon,
\end{split}\]
where $\epsilon=\frac{\delta(\alpha_-+(1-\epsilon_0)\alpha_+)}{2}$, and $\delta<R-1$. The rest of the argument is the same as {that   of the proof of Theorem~\ref{th-7}(i).}

{Next, we prove non-explosivity under (C3), (C4), or (C5).} Let $f$ and $g$ be as in (ii) in the proof of Theorem~\ref{th-7}. By ($\rm\mathbf{A3}$), for some large $M>0$ to be determined, for all $x\in\N_0$,
\begin{align*}
 {Qf(x)}&=\sum_{\om\in\cT_-}\lambda_{\om}(x)(\log\log(x+1+\om)-\log\log(x+1))\\
&\quad+\sum_{\om\in\cT_+}\lambda_{\om}(x)(\log\log(x+1+\om)-\log\log(x+1))\\
&=\sum_{\om\in\cT_-}\lambda_{\om}(x)\log\Bigr(1+\frac{\log\bigl(1+\frac{\om}{x+1}\bigr)}{\log(x+1)}\Bigr)+
\sum_{\om\in\cT_+}\lambda_{\om}(x)\log\Bigr(1+\frac{\log\bigl(1+\frac{\om}{x+1}\bigr)}{\log(x+1)}\Bigr)\\
&=\sum_{\om\in\cT_-}\lambda_{\om}(x)\Bigl(\frac{\om}{(x+1)\log(x+1)}+\rO\bigl((x+1)^{-2}(\log (x+1))^{-1}\bigr)\Bigr)\\
&\quad+\sum_{\om\in\cT_+}\lambda_{\om}(x)\log\Bigr(1+\frac{\log\bigl(1+\frac{\om}{x+1}\bigr)}{\log(x+1)}\Bigr)\\
& \le\sum_{\om\in\cT_-}\lambda_{\om}(x)\Bigl(\frac{\om}{(x+1)\log(x+1)}+\rO\bigl((x+1)^{-2}(\log (x+1))^{-1}\bigr)\Bigr)\\
&\quad+\sum_{\om\in\cT_+}\lambda_{\om}(x)\frac{\om}{(x+1)\log(x+1)}\\
&=\frac{1}{(x+1)\log(x+1)}\sum_{\om\in\cT}\lambda_{\om}(x)\om+\rO\bigl((x+1)^{R-2}(\log (x+1))^{-1}\bigr)\\
&=\alpha\frac{(x+1)^{R-1}}{\log(x+1)}+\rO\bigl((x+1)^{R-2}(\log (x+1))^{-1}\bigr)\le g(f(x)),
\end{align*}
provided {(C3), (C4), or (C5)} holds.
The rest of the proof is  {the same as that  of   Theorem~\ref{th-7}(ii).}

\subsection{{Proof of Theorem~\ref{th-8}}}

Let $h_A=\bP_{Y_0}(\tau_A<\infty)$ be the \emph{hitting probability} \cite{N98}. In particular, $h_A$ is called the \emph{absorption probability} if $A$ is a closed communicating class. To verify conditions for certain absorption requires the following property for hitting probabilities. For any set $A\subseteq\cX$, we write $h_A(i)$ for $h_A$, to emphasize the dependence of the hitting probability on the initial state $i\in\cX$. In particular, if $A=\{x\}$ is a singleton, we simply write $h_x$ for $h_A$.

Assume without loss of generality that $\cX=\N_0$, and $\partial\subseteq\{0\}$.

\noindent {\rm(i)} We first show recurrence and transience.
The idea {of applying the classical semimartingale approach} originates from \cite{L60}. It suffices to show recurrence and transience for the embedded discrete time Markov chain $\wY_n$ of $Y_t$.

To show recurrence, let $Z_n=\log\log(\widetilde{Y}_n+1)$. Since one-to-one bicontinuous transformation of the state space preserves the Markov property and recurrence, it suffices to show recurrence for $Z_n$.
In the light of the expression for transition probability of $\wY_n$, we have \[\E(Z_{n+1}-Z_n|Z_n=\log\log(x+1))=\frac{1}{\sum_{\om\in\cT}\lambda_{\om}(x)}\sum_{\om\in\cT}\lambda_{\om}(x)(\log\log(x+\om)-\log\log x).\]
By tedious but straightforward computation, we have the following asymptotic expansion:
\[\E(Z_{n+1}-Z_n|Z_n=\log\log(x+1))=\frac{\alpha x^R+\beta x^{R-1}-\vartheta x^{R-1}(\log x)^{-1}+\rO(x^{R-2})}{(1+x)\log(1+x)\sum_{\om\in\cT}\lambda_{\om}(x)}.\] From this asymptotic expansion and note that $\vartheta>0$, we have
\[\E(Z_{n+1}-Z_n|Z_n=\log\log(x+1))\le0,\quad \forall n\in\N_0,\ \text{for all large}\ x,\]
provided {either (C3) or (C6) holds}. From Proposition~\ref{Spro-L1} it follows the recurrence of $Z_n$, and thus recurrence of $\wY_n$ as well.

Next, we prove for transience of $\wY_n$ under reverse conditions {(that is, neither (C3) nor (C6) holds)}.  Let $Z_n'=1-(1+\wY_n)^{-\delta}$ with $\delta>0$ to be determined. Again, $Z_n'$ is a Markov chain, and $\wY_n\to\infty$ if and only if $Z_n'\to1$, which implies \eqref{sup} is fulfilled for $Z_n'$ with $M=1$ since $\wY_n$ on a subset of $\N_0$ is irreducible. Similar as the above computation, we have the asymptotic expansion
\[\E(Z_{n+1}'-Z_n'|Z_n'=1-(1+x)^{-\delta})=\frac{\delta}{(1+x)^{\delta+1}\sum_{\om\in\cT}\lambda_{\om}(x)}(\alpha x^R+(\beta-\delta\vartheta)x^{R-1}+\rO(x^{R-2})).\] Hence
\[\E(Z_{n+1}'-Z_n'|Z_n'=1-(1+x)^{-\delta})\ge0,\quad \forall n\in\N_0,\]
for all large $x$ (and so for all values of $z=Z_n'$ in some interval $C\le z<1$), provided $\alpha>0$ or $\alpha=0,\ \beta>0$ with $\delta<\frac{\beta}{\vartheta}$. By Proposition~\ref{Spro-L2},
$$\mathbb{P}(\lim_{n\to\infty}Z_n'=1)=1,$$
that is,
$$\mathbb{P}(\lim_{n\to\infty}\wY_n=\infty)=1,$$ meaning $\wY_n$ is transient.

\medskip

 {\rm(ii)}  Let $\widetilde{\om}\in\cT_+$. Let $k_0=\min\{l\in\N\colon l\widetilde{\om}\in\N\}$. Define $Z_t$ to be a CTMC on $\N_0$ with transition matrix $\widetilde{Q}=(\widetilde{q}_{xy})$ satisfying for all $x\neq y,\ x,y\in\N_0$,\[\widetilde{q}_{xy}=\cab q_{xy},\quad \text{if}\  x\in\N,\\ 1,\quad\ \ \, \text{if}\ x=0\ \text{and}\ y=j\widetilde{\om},\ j=1,\ldots,k_0,\\ 0,\quad\ \,\ \text{else}.\cae\] It is easy to verify that $Z_t$ is irreducible on $\N_0$. In the following, we show the recurrence of $Z_t$ is equivalent to the absorption of $Y_t$, which yields the conclusion.

On one hand, applying (i) to $Z_t$, we have $Z_t$ is recurrent if and only if {(C3) or (C6)} holds. On the other hand, from Proposition~\ref{pro-2}, $Z_t$ is recurrent if and only if $h^Z_{0}(i)=1$ for all $i\in\cX$, where $h^Z_{0}(i)$ is the hitting probability for $Z_t$. By Proposition~\ref{pro-1}, $h^Z_{0}(i)=1$ for all $i\in\cX$ if and only if $(1,\ldots,1)$ is the minimal non-negative solution to the linear equations
\[\cab x_i=1,\qquad\qquad\qquad\qquad\quad\ i=0,\\ \sum_{j\in\partial\setminus\{i\}}\widetilde{q}_{ij}(x_i-x_j)=0,\quad i\in\N,\cae\] which, by the definition of $\widetilde{Q}$, are identical to
\eqb\label{Eq-2}\cab x_i=1,\qquad\qquad\qquad\qquad\quad\ i=0,\\ \sum_{j\in\cX\setminus\{i\}}q_{ij}(x_i-x_j)=0,\quad i\in\N.\cae\eqe
By Proposition~\ref{pro-1}, $Y_t$ has certain absorption if and only if $(1,\ldots,1)$ is the minimal non-negative solution to \eqref{Eq-2}. Hence the recurrence of $Z_t$ is equivalent to the certain absorption of $Y_t$.

\subsection{{Proof of Theorem~\ref{cor-infinite-recurrence}}} {When $\Omega$ is infinite, then due to $\rm(\mathbf{A3})$, also $\Omega_+$ is infinite while $\Omega_-$ is  finite. Hence, the asymptotic expansions of the sum over all negative jumps in $\Omega_-$ in the proof of Theorem~\ref{th-8} remain valid, while the asymptotic expansions of the sum over all positive jumps in $\Omega_+$ might fail, in that the sum is infinite. Nevertheless, an upper estimate of $Qf(x)$ for certain Lyapunov functions is still possible, as demonstrated in the proof of Theorem~\ref{cor-infinite-explosivity}. A careful examination of the arguments of Theorem~\ref{th-8} shows that the desired upper estimates of $Qf(x)$ hold under the respective conditions listed in Theorem~\ref{cor-infinite-recurrence}, by replacing the asymptotic expansions by one-sided inequalities.}

\subsection{{Proof of Theorem~\ref{th-11}}}

Assume without loss of generality that $\cX=\N_0$.

We first prove the existence of moments of hitting times assuming $\cT$ is finite, by applying Proposition~\ref{Spro-7}(i) case by case.
{We now prove the existence of moments under {(C13) for $0<\delta<1/2$}. Let $f(x)=\sqrt{x+1}$ for $x\in\N_0$.
One can directly verify that for every $0<\sigma=2\delta<2$}, there exists $0<c<+\infty$ such that $Q f^{\sigma}(x)\le -cf^{\sigma-2}(x)$ for all large $x$. By Proposition~\ref{Spro-7}{\rm(i)}, there exists $a>0$ such that
\begin{equation*}
\E_x(\tau^{\epsilon}_{\{f\le a\}})<+\infty,\quad \forall x\in\cX,\ \forall 0<\epsilon<\sigma/2.
\end{equation*}
Moreover, $\{f\le a\}$ is finite since $\lim_{x\to\infty}f(x)=+\infty$.

{Analogous arguments apply to the cases (C16) for $0<\delta<1$ and (C10) for $\delta>0$ with $f(x)=\log (x+1)$; condition (C14) for $0<\delta<\frac{\beta}{\beta-\gamma}$ with $f(x)=\sqrt{x+1}$; condition (C9) with $f(x)=\log\log(x+1)$, and condition  (C3) with $f(x)=x+1$.}

\smallskip

Next we prove the non-existence of hitting times by Proposition~\ref{Spro-7}{\rm(ii)} assuming $\cT$ is finite. For all cases, let $f(x)=g(x)$, and specifically, let $f(x)=x+1$ in the cases {(C13) for $\delta>1$ and (C15) for $\delta>\tfrac{\beta}{\beta-\gamma}$,} and $f(x)=\log(x+1)$ in  case {(C16) for $\delta>1$}. Note that in case {(C15) for $\delta>\tfrac{\beta}{\beta-\gamma}$, we have} $-\frac{\beta}{\vartheta}<1$ is equivalent to $\gamma>0$. The tedious but straightforward verification of the conditions  (ii-1)-(ii-4) in Proposition~\ref{Spro-7} is left to the interested reader.

\subsection{{Proof of Theorem ~\ref{cor-infinite-passagetime}}}

{As alluded to in the proof of Theorem~\ref{cor-infinite-recurrence}, the desired upper estimate of $Qf(x)$ with the same $f$ under (C3) still holds as in the proof of Theorem~\ref{th-11}, by replacing the asymptotic expansions by one-sided inequalities.}

\subsection{{Proof of Theorem~\ref{th-9}}}

Assume without loss of generality that $\cX=\N_0$ and $\partial\subseteq\{0\}$. We prove this theorem by Propositions~\ref{Spro-4}-\ref{Spro-5}, assuming $\cT$ is finite.
 We emphasize that the non-existence of a QSD only rests on the failure of certain absorption.

{\rm(i)} Since $Y_t$ is recurrent, we have $\wY_n$ is recurrent by Proposition~\ref{Spro-N}. Let $\widetilde{\pi}$ be its unique (up to a scalar) invariant measure.
By \cite[Theorem~3.5.1]{N98}, and $\sum_{\om\in\cT}\lambda_{\om}(x)>0$ is bounded away from zero uniformly in $x$,
\begin{equation}\label{Eq-Relation}
\pi(x)=\frac{\widetilde{\pi}(x)}{q_x}=\frac{\widetilde{\pi}(x)}{\sum_{\om\in\cT}\lambda_{\om}(x)},\quad x\in\N_0,\end{equation}
 is a finite stationary measure for $Y_t$. Let $$b=\lim_{x\to\infty}\frac{\sum_{\om\in\Omega}\lambda_\om(x)}{x^R}=\sum_{d_{\om}=R}a_{\om}.$$

Note that when $R=0$, we have $\gamma=0$, and hence $R=0$, $\gamma\neq0$ and $\beta\le0$ cannot occur (see Table~\ref{table-summary}). Due to Theorem~\ref{th-8}, we only need to show

\begin{enumerate}
  \item[$\bullet$] $Y_t$ is positive recurrent if one of {(C3), (C9), (C10), (C11)} holds.
  \item[$\bullet$] $Y_t$ is null recurrent {if one of the following conditions holds
   \begin{enumerate}
     \item[$\circ$] $R=0$, $\alpha=0$;
     \item[$\circ$] $R=1$, $\alpha=\gamma=0$;
     \item[$\circ$] $R=2$, $\alpha=0$, $\beta=0$;
\item[$\circ$] $R=1$, $\alpha=0$, $\beta\le 0\le\gamma$.
   \end{enumerate}}
   \end{enumerate}

First, we show positive recurrence {and exponential ergodicity}. {Notice that (C9)$\cup$(C10)}\\ 
\noindent{$=$(C18)$\cup$(C21). Moreover, (C3) can be decomposed into (C17) and} {\noindent (C17)'  $R=0$, $\alpha<0$.}

For {(C11) or (C17)'}, let $f(x)=x+1$; for {(C21)}, let $f(x)=\log(x+1)$. Then one can verify (using the asymptotics of $Qf$ in Appendix~\ref{subsec-asymptotics})  that there exists $\epsilon>0$ such that $Q f(x)\le-\epsilon$ for all large $x$.
With an appropriate finite set $F$, by Proposition~\ref{Spro-9}, $X_t$ is positive recurrent and there exists a unique ergodic stationary distribution on $\N_0$.

{For {(C17)}, let $f(x)=x+1$; for {(C18)}, let $f(x)=\log\log(x+1)$.} Then one can similarly verify that there exists $\epsilon>0$ such that $Q f(x)\le-\epsilon f(x)$ for all large $x$. By Proposition~\ref{Spro-4}, $Y_t$ is positive recurrent and there exists a unique {\em exponentially} ergodic stationary distribution on $\N_0$.

Next, we show null recurrence case by case, applying Proposition~\ref{prop-non-summability} as well as non-existence of moments of passage times in Theorem~\ref{th-11}.

$\bullet$ {Assume $R=0$, $\alpha=0$.} Then $$\lambda_{\om}(x)\equiv a_{\om},\quad \om\in\cT,\quad \sum_{\om\in\cT}a_{\om}\om=0,$$ since $\lambda_{\om}$ are polynomials.
    Let $c=-\min\cT$, $g(x)=x+c$, $h(x)=(x+c)^{1/2}$, and $f(x)=1$. Hence it is easy to verify that Proposition~\ref{prop-non-summability}(i) and (iii) hold. Let $\mathcal{F}_n$ be the filtration $\widetilde{Y}_n$ is adapted to. Then
 \begin{align*}
   \E(h(\widetilde{Y}_{n+1}-h(\widetilde{Y}_n)|\mathcal{F}_n))
   =&\sum_{\om\in\cT}a_{\om}(\sqrt{\widetilde{Y}_n+\om+c}-\sqrt{\widetilde{Y}_n+c})\\
   =&\sum_{\om\in\cT}\frac{a_{\om}\om}{\sqrt{\widetilde{Y}_n+\om+c}+\sqrt{\widetilde{Y}_n+c}}\\
   =&\sum_{\om\in\cT}\left(\frac{a_{\om}\om}{\sqrt{\widetilde{Y}_n+\om+c}+\sqrt{\widetilde{Y}_n+c}}
   -\frac{a_{\om}\om}{\sqrt{\widetilde{Y}_n+c}+\sqrt{\widetilde{Y}_n+c}}\right)\\
   =&\sum_{\om\in\cT}\frac{\sqrt{\widetilde{Y}_n+c}-\sqrt{\widetilde{Y}_n+\om+c}}{2\sqrt{\widetilde{Y}_n+c}
   (\sqrt{\widetilde{Y}_n+\om+c}+\sqrt{\widetilde{Y}_n+c})}
   a_{\om}\om\\
   =&\frac{1}{2\sqrt{\widetilde{Y}_n+c}}\sum_{\om\in\cT}\frac{a_{\om}\om^2}{(\sqrt{\widetilde{Y}_n+\om+c}
   +\sqrt{\widetilde{Y}_n+c})^2}\ge0,
 \end{align*}
 which shows Proposition~\ref{prop-non-summability}(ii) also holds for finite set $A=[0,c]_1$. Moreover, \[\E(\widetilde{Y}_{n+1}-\widetilde{Y}_n|\mathcal{F}_n)=\sum_{\om\in\cT}a_{\om}\om\equiv0<1=f(x),\quad P_z\text{-a.s.}\quad \text{on}\ \{\tau_A>n\},\] which shows Proposition~\ref{prop-non-summability}(iv) also holds. Since $\pi(j)=\frac{\widetilde{\pi}(j)}{\sum_{\om\in\cT}a_{\om}}$, by Proposition~\ref{prop-non-summability}, we have $\sum_{j\in\N_0}\pi(j)=\infty$.  By the uniqueness of stationary measures under recurrence condition \cite{M63}, we know $Y_t$ is null recurrent.

$\bullet$ {Assume $R=1$, $\alpha=\gamma=0$.} Then $\E(\wY_{n+1}-\wY_n|\wY_n=x)
    =\frac{\sum_{\om\in\cT}\lambda_{\om}(x)\om}{\sum_{\om\in\cT}\lambda_{\om}(x)}\equiv0$ for all large $x$. Applying \cite[Theorem~3.5(iii)]{AI99} with $\varepsilon_0=1/2$ and $a>1$, we have \eqb\label{asy}\sum_{j\ge2}\frac{\widetilde{\pi}(j)}{j\sqrt{\log j}}=\infty.\eqe From \eqref{Eq-Relation}, \eqref{asy} and $\deg(\sum_{\om\in\cT}\lambda_{\om}(x))=1$ for large $x$, it follows that \[\sum_{j\ge2}\frac{\pi(j)}{\sqrt{\log j}}=\infty,\] which implies that $\sum_{j\ge2}\pi(j)=\infty$. The rest argument is the same as in the case $R=0$, $\alpha=0$.

$\bullet$  {Assume $R=2$, $\alpha=\beta=0$.}  Let $g_{\delta}(x)=(\log x)^{\delta}$ for some $1<\delta\le2$. Choose $g=g_{2}$, $h=g_{3/2}$, and $A=[0,a]$ for some large $a>0$ to be determined. Then it is straightforward to verify (using the asymptotics of $Qf$ in Appendix~\ref{subsec-asymptotics}) that $$\E(g_{\delta}(\widetilde{Y}_{n+1})-g_{\delta}(\widetilde{Y}_n)|\mathcal{F}_n)
    =\delta(\delta-1)\frac{\vartheta}{b} x^{-2}(\log x)^{\delta-2}+\rO(x^{-3}(\log x)^{\delta-1}).$$ Hence it is easy to show that $h$ and $g$ satisfies Proposition~\ref{prop-non-summability}(i)-(iii).

Moreover, it is easy to show that there exists $C= C(\vartheta,b)>0$ and $f(x)=Cx^{-2}$ such that
\[\E(g(\widetilde{Y}_{n+1}-g(\widetilde{Y}_n)|\mathcal{F}_n))\le f(\widetilde{Y}_n)\quad \bP_x\text{-a.s.}\quad \text{on}\quad \{\tau_A>n\}.\] By Proposition~\ref{prop-non-summability}, \[\sum_{j\in\cX}f(j)\widetilde{\pi}(j)=\infty.\] Since $\sum_{\om\in\cT}\lambda_{\om}(x)=bx^2+\rO(x)$, substituting \eqref{Eq-Relation} we obtain: $$\sum_{j\in\cX}\pi(j)=\infty.$$ Hence $Y_t$ is null recurrent.
 
$\bullet$ {Assume $R=1$, $\alpha=0$, $\gamma>0$.} To prove null recurrence, it suffices to show that there exists $x\in\N_0$ such that $\E_x(\tau^+_{x})=\infty$. Let $B\subsetneq\N_0$ be as in Theorem~\ref{th-11}{(ii)} and $x=\max B\in\N_0$. Hence $Y_{J_1}\in\cT+x\subsetneq\N_0$ a.s. and $(\cT_++x)\cap B=\varnothing$. By the Markov property of $Y_t$, \[\E_x(\tau_x-J_1|Y_{J_1}=j)=\E_j(\tau_x),\quad \forall j\in\N_0\setminus\{x\}.\] Hence by the law of total probability,
\begin{align*}
\E_x(\tau_x^+)& =\E_x(J_1)+\sum_{j\in\N_0\setminus B}\E_j(\tau_x)\bP_x(Y_{J_1}=j)+\sum_{j\in B}\E_j(\tau_x)\bP_x(Y_{J_1}=j)\\
&\ge\sum_{j\in\N_0\setminus B}\E_j(\tau_x)\bP_x(Y_{J_1}=j)\\
&\ge\bP_x(Y_{J_1}\in\cT_++x)\inf_{j\in\N_0\setminus B}\E_j(\tau_x)\\
&=\frac{\sum_{\om\in\cT_+}\lambda_{\om}(x)}{\sum_{\om\in\cT}\lambda_{\om}(x)}\cdot\infty=\infty,
\end{align*}
since $\frac{\sum_{\om\in\cT_+}\lambda_{\om}(x)}{\sum_{\om\in\cT}\lambda_{\om}(x)}>0$, and $\E_j(\tau_B)=\infty$ for all $j\in\N_0\setminus B$, by Theorem \ref{th-11}{\rm(ii)}, under respective conditions.

{\rm(ii)} By assumption, $\partial=\{0\}$ and $\cX\setminus\partial=\N$. We first show non-existence of QSDs. Construct an irreducible process $Z_t$ on $\N_0$ with transition rate matrix $\widetilde{Q}$ as in the proof of Theorem~\ref{th-8}. Applying conclusion (i) to $Z_t$, $Z_t$ is \emph{not} positive recurrent when none of the conditions of {(C3), (C9), (C10), (C11)} holds. It thus suffices to show that the existence of QSD for $Y_t$ implies positive recurrence of $Z_t$. Assume that $Y_t$ has a QSD on $\N$. By Proposition~\ref{pro-3}, there exists $\psi>0$ such that $$\psi\E_i(\tau_{0})=\E_i(\psi\tau_{0})\le\E_i(\exp(\psi\tau_{0}))<\infty,\quad \forall i\in\N_0.$$ Let $\E_i^Z(\tau_{0})$ be the expected hitting times for process $Z_t$. By Proposition~\ref{pro-4}, $(\E_i^Z(\tau_{0}))_{i\in\N_0}$ is the minimal solution to the associated linear equations with $\widetilde{Q}$. By a similar argument as in the proof of Theorem~\ref{th-8}, $(\E_i^Z(\tau_{0}))_{i\in\N_0}$  is also the minimal solution to the associated linear equations associated with transition matrix $Q$, and thus $$\E_i^Z(\tau_{0})=\E_i(\tau_{0})<\infty,\quad \forall i\in\N_0.$$
Since $Z_t$ is irreducible, we have that $Z_t$ is positive recurrent, due to the classical fact that for an irreducible CTMC that positively recurs to a finite set, positively recurs everywhere (c.f. \cite{MP14}).

Next, we prove the ergodicity {of the $Q$-process}. For {either (C18) or (C19)}, $Y_t$ is non-explosive by Theorem~\ref{th-7}, and let $f(x)=(2-x^{-1})\mathbbm{1}_{\cX\setminus\partial}(x)$.  Under the respective conditions, it is straightforward to verify that $$\lim_{x\to\infty}\frac{Q f(x)}{f(x)}=-\infty,$$ which implies that the set $D=\{x\in \cX\setminus\partial: \frac{Q f(x)}{f(x)}\ge-\psi_0-1\}$ is finite. Then with such $f$, $D$ and $\delta$, the conditions in Proposition~\ref{Spro-5} are satisfied and the conclusions follow. Note that $\supp\nu=\N$ comes from the fact that the support of the ergodic stationary distribution of the $Q$-process is $\N$ by the irreducibility.

\subsection{{Proof of Theorem~\ref{cor-infinite-ergodicity}}} 
{As discussed in the proof of Theorem~\ref{cor-infinite-recurrence}, the desired upper estimates of $Qf(x)$ with the same $f$ under the respective conditions still hold,   by replacing the asymptotic expansions by one-sided inequalities  in the proof of Theorem~\ref{th-9}.}

\subsection{{Proof of Theorem~\ref{th-10}}}

First we prove implosivity. Assume  {(C18) or (C19)} holds. Hence $Y_t$ is recurrent by Theorem~\ref{th-8}. Let $f(x)=1-(x+1)^{-1}$. One can show that the conditions in Proposition~\ref{Spro-6}{\rm(i-1)} are fulfilled, and implosivity is achieved.

Next we turn to non-implosivity. Assume neither {(C18) nor (C19)} holds. Since $Y_t$ does not implode towards any transient state, it suffices to prove non-implosivity assuming recurrence condition, i.e., {(C3) or (C6)}, by Theorem~\ref{th-8}. Let $f(x)=\log\log(x+1)$. It is easy to verify that conditions  (with $\delta=2$) in Proposition~\ref{Spro-6}{\rm(ii)} are fulfilled, and $Y_t$ is non-implosive.

\subsection{{Proof of Theorem~\ref{cor-infinite-implosivity}}} 
{As discussed in the proof of Theorem~\ref{cor-infinite-recurrence}, the same functon $f$ under condition (C19) also serves as a Lyapunov function like in  the proof of Theorem~\ref{th-10}.}

\section*{Acknowledgements}
The authors thank Philip Pollett for making us aware of the reference \cite{R61}, Linard Hoessly for commenting on the manuscript, {as well as the editors' and referees' comments which help improve the presentation of the paper}. The authors acknowledge the support from The Erwin Schr\"{o}dinger Institute (ESI) for the workshop on ``Advances in Chemical Reaction Network Theory''. CX acknowledges the TUM Foundation Fellowship and the Alexander von Humboldt Fellowship. {CW acknowledges support from the Novo Nordisk Foundation (Denmark), grant NNF19OC0058354. }

\newpage

\appendix

\renewcommand\appendixname{Appendix}

\section{Classical criteria for dynamics}\label{appB}

Let $Y_t$ be a CTMC  on state space $\cX\subseteq\N_0$ with transition matrix $Q=(q_{x,y})_{x,y\in\cX}$,
and let $(\wY_n)_{n\in\N_0}$ be its embedded discrete time Markov chain. Let $q_x=\sum_{y\neq x}q_{x,y}$, $\forall x\in\cX$. The transition probability matrix $P=(p_{x,y})_{x,y\in\cX}$ of $\wY_n$ is given by:
\[p_{x,y}=\left\{\begin{array}{cl} q_{x,y}/q_x, & \text{if}\quad x\neq y, q_x\neq0, \\ 0, & \text{if}\quad x\neq y, q_x=0,\end{array}\right.
\qquad p_{x,x}=\left\{\begin{array}{cl} 0, & \text{if}\quad q_x\neq0,\\ 1, & \text{if}\quad q_x=0.\end{array}\right.\]

Let $\mathfrak{F}$ be the set of all non-negative (finite) functions on $\cX$
satisfying
$$\sum_{\om\in\cT}\lambda_{\om}(x)|f(x+\om)|<+\infty,\ \forall x\in\cX.$$
Since $\cX$ is discrete, $\mathfrak{F}$ is indeed a subset of non-negative continuous (and thus Borel measurable) functions on $\cX$. The associated infinitesimal generator is also denoted by $Q$:
\[Qf(x)=\sum_{\omega\in\cT}\lambda_{\om}(x)\lt(f(x+\om)-f(x)\rt),\quad \forall x\in\cX,\quad f\in\mathfrak{F}.\]
By ${\rm(\mathbf{A3})}$,  $\mathfrak{F}$ is a subset of the domain of $Q$. In particular, functions with sub-linear growth rate are in $\mathfrak{F}$. When $\cT$ is finite, $\mathfrak{F}$ is the whole set of all non-negative (finite) functions on $\cX$.

{Before presenting the proofs, we recall   general Lyapunov-Foster type criteria for the reader's convenience \cite{CV17,MP14,MT93}. The proofs  are mainly based on  constructions of specific Lyapunov functions. To avoid tedious but straightforward verifications against the corresponding criteria, we simply provide the specific Lyapunov functions we apply and leave the straightforward verifications   to the interested reader.}

The next proposition is used  to estimate $Qf$ for  a Lyapunov function  $f$.  Let $R_+=\max\{\deg(\lambda_{\om})\colon \om\in\cT_+\}$
 and recall $R=\max\{\deg(\lambda_{\om})\colon \om\in\cT\}$. It hilds that  $R,\ R_+\le M$.

\prob\label{Spro-1}
Assume ${\rm(\mathbf{A1})}$-${\rm(\mathbf{A4})}$.  Let $f_n(x)=\sum_{\om\in\cT_+,\ \om\le n}\lambda_{\om}(x)\om$ for $n\in\N$. Then $f_n$  converges non-decreasingly to a polynomial $f$ of degree $R_+$ on $\cX\setminus[0,u[_1$, \eqb\label{Eq-0}f(x)=\sum_{\om\in\cT_+}\lambda_{\om}(x)\om,\quad x\in\cX\setminus[0,u[_1,\eqe
with $u$  as in ${\rm(\mathbf{A4})}$. Furthermore, $\sum_{\om\in\cT}\lambda_{\om}(x)\om$ is a polynomial of degree at most $R$ on $\cX\setminus[0,u[_1$, and $\sum_{\om\in\cT}\lambda_{\om}(x)$ is a polynomial of degree $R$ on $\cX\setminus[0,u[_1$. Moreover, there exists $u'\ge u$, such that
\eqb\label{Eq-4}\lim_{n\to\infty}\sup_{x\ge u'}\frac{f(x)-f_n(x)}{f(x)}=0.\eqe
\proe

\prb Assume without loss of generality that $u=0$. Otherwise consider $\lambda_{\om}(\cdot+u)$. Furthermore, assume $\cX=\N_0$.
Let $n_*=\min\{\om\in\cT_+\colon \deg(\lambda_{\om})=R_+\}$. Then $(f_n)_{n\ge n_*}$ is a non-decreasing sequence of polynomials on $\N_0$ of degree $R_+$ as the coefficient of $x^{R_+}$ is non-negative in $\lambda_\omega(x)$. By ${\rm(\mathbf{A3})}$-${\rm(\mathbf{A4})}$, $f$ defined in \eqref{Eq-0} is a non-negative finite function on $\N_0$, and $f_n$ converges to $f$ pointwise on $\N_0$.

Write $f_n(x)=\sum_{j=0}^{R_+}\alpha^{(j)}_nx^{\underline{j}}$ as a sum of descending factorials. Since $f_n(j)\to f(j)$ for $j=0,\ldots,R_+$ by assumption, we find inductively in $j$ that $\alpha_n^{(j)}\to\alpha^{(j)}$ for some $\alpha^{(j)}\in\R$,  $j=0,\ldots,R_+$. Let $\widetilde{f}(x)=\sum_{j=0}^{R_+}\alpha^{(j)}x^{\underline{j}}$. Consequently,  $f_n\to \widetilde{f}$ pointwise on $\N_0$, which implies $f=\widetilde f$ and that $f$ is a polynomial on $\N_0$. By  definition of $n_*$ and  monotonicity of  $(f_n)_{n\ge n_*}$, we have $\alpha^{(R_+)}_n\ge \alpha^{(R_+)}_{n_*}>0$ for  $n\ge n_*$, and $\alpha^{(R_+)}=\lim_{n\to\infty}\alpha^{(R_+)}_{n}>0$. Hence $\deg(f)=R_+$. Similarly, by ${\rm(\mathbf{A2})}$, one can show that $\sum_{\om\in\cT}\lambda_{\om}(x)\om$ is a polynomial of degree at most $R$  on $\N_0$, and
$\sum_{\om\in\cT}\lambda_{\om}(x)$ is a polynomial of degree $R$  on $\N_0$. It remains to prove \eqref{Eq-4}.
Indeed, for all $x\in\N$,
\[
  0\le\frac{f(x)-f_n(x)}{f(x)}=\frac{\sum_{j=0}^{R_+}(\alpha^{(j)}-\alpha_n^{(j)})x^{\underline{j}}}{\sum_{j=0}^{R_+}\alpha^{(j)}x^{\underline{j}}}\le \frac{x^{R_+}\sum_{j=0}^{R_+}|\alpha^{(j)}-\alpha_n^{(j)}|}{\sum_{j=0}^{R_+}\alpha^{(j)}x^{\underline{j}}}.
\]
Since there exists $u'\ge u$ such that $f(x)\ge\frac{1}{2}\alpha^{(R_+)}x^{R_+}$ for all $x\ge u'$, then
\[\sup_{x\ge u'}\frac{f(x)-f_n(x)}{f(x)}\le \frac{2\sum_{j=0}^{R_+}|\alpha^{(j)}-\alpha_n^{(j)}|}{\alpha^{(R_+)}},
\] which implies \eqref{Eq-4}.
\pre

\subsection{Criteria for explosivity and non-explosivity}

\prob\label{Spro-3}{\rm\cite[Theorem\,1.12, Remark\,1.13]{MP14}}
Assume $Y_t$ is irreducible on $\cX$. Suppose that there exists a triple $(\epsilon,A,f)$ with a constant $\epsilon>0$, a set $A$ a proper finite subset of $\cX$, such that $\cX\setminus A$ is infinite, and a function $f\in\mathfrak{F}$, such that
\begin{itemize}
\item[\rm(i)] there exists $x_0\in \cX\setminus A$ with $f(x_0)<\min_A f$,
\item[\rm(ii)] $Qf(x)\le-\epsilon$ for all $x\in \cX\setminus A$.
\end{itemize}
Then, the expected life time $\E_x(\zeta)<+\infty$ for all $x\in\cX$.
\proe

\prob\label{Spro-13}{\rm\cite[Theorem\,1.14]{MP14}}
Assume $Y_t$ is irreducible on $\cX$. Let $f\in\mathfrak{F}$ be such that  $\lim_{x\to\infty}f(x)=+\infty$. If
\begin{itemize}
\item[\rm(i)] there exists a non-decreasing function $g\colon[0,\infty[\to[0,\infty[$, such that $G(z)=\int_0^z\frac{{\rm d}y}{g(y)}<+\infty$ for all $z\ge0$ but $\lim_{z\to\infty}G(z)=+\infty$, and
\item[\rm(ii)] $Qf(x)\le g(f(x))$ for all $x\in\cX$,
\end{itemize}then $\mathbb{P}_x(\zeta=+\infty)=1$ for all $x\in\cX$.
\proe

We give Reuter's criterion on explosivity of a CTMC in terms of the transition rate matrix.
\prob\label{Spro-R}{\rm\cite[Theorem\,10]{R57}, {\cite[Theorem\,13.3.11]{B20}}}
Assume $Y_t$ is irreducible on $\cX$ with transition matrix $Q$. Then, $Y_t$ is explosive with positive probability if and only if there exists a nonzero non-negative solution to
\[Qx=\lambda x,\] for some (and all) $\lambda>0$.
\proe

\subsection{Criteria for recurrence, transience and certain absorption}

To prove Theorem~\ref{th-8}(i), we count on the following equivalence regarding recurrence and transience between a CTMC and its embedded discrete time Markov chain.
\prob{\rm\cite[Theorem\,3.4.1]{N98}}\label{Spro-N} Assume that $Y_t$ is irreducible. Let $\widetilde{Y}_n$ be the embedded discrete time Markov chain of $Y_t$. Then
\enb
\item[(i)] $Y_t$ is recurrent if and only if $\wY_n$ is recurrent.
\item[(ii)] $Y_t$ is transient if and only if $\wY_n$ is transient.
\ene
\proe
Apart from the above equivalence, we need the following two properties to prove   recurrence and transience for an irreducible discrete time Markov chain.
\prob{\rm\cite[Theorem\,2.1]{L60}}\label{Spro-L1}
Let $Z_n$ be an irreducible discrete time Markov chain on a subset of $\N_0$. If
\[\E(Z_{n+1}-Z_n|Z_n=x)\le0,\quad \forall n\in\N_0,\ \text{for all large}\ x,\]
then $Z_n$ is recurrent.
\proe
\prob{\rm\cite[Theorem\,2.2]{L60}}\label{Spro-L2}
Let $Z_n$ be a discrete time Markov chain on the real line. Assume that there exists a positive constant $M$ such that\[0\le Z_n<M<\infty,\ \forall n\in\N_0,\] \eqb\label{sup}\mathbb{P}(\limsup_{n\to\infty}Z_n=M)=1.\eqe  If there exists a constant $C<M$ such that
\[\E(Z_{n+1}-Z_n|Z_n=x)\le0,\quad \forall n\in\N_0,\ \text{for all}\ x\ge C,\]
then \[\mathbb{P}(\lim_{n\to\infty}Z_n=M)=1.\]
\proe
 Recall the definition of $\lambda_{\om}$, the transition probabilities {$P=(p_{x,y})_{x,y\in\cX}$} of $\wY_n$ are:
\[p_{x,x+\om}=\frac{\lambda_{\om}(x)}{\sum_{\tilde{\om}\in\cT}\lambda_{\tilde{\om}}(x)}\mathbbm{1}_{\cX\cap(\underset{\tilde{\om}{\in}\cT}{\cup}
\supp\lambda_{\tilde{\om}})}(x),\quad p_{x,x}=1-\mathbbm{1}_{\cX\cap(\underset{\tilde{\om}{\in}\cT}{\cup}
\supp\lambda_{\tilde{\om}})}(x),\ x\in\N_0,\ \om\in\cT.\]

\prob\cite[Theorem~3.3.1]{N98}\label{pro-1}
Let $A\subseteq\cX$. The vector of hitting probabilities $(h_A(i))_{i\in\cX}$ is the minimal non-negative solution to the following linear equations:
\[\cab h_A(i)=1,\qquad\qquad\qquad\qquad\qquad\ \ i\in A,\\ \sum_{j\in\cX\setminus\{i\}}q_{ij}(h_A(i)-h_A(j))=0,\quad i\in\cX\setminus A.\cae\] (Minimality means that if $x$ is another non-negative solution, then $x_i\ge h_A(i)$ for all $i\in\cX_0$.)
\proe
\prob\cite[Theorem~1.5.7,Theorem~3.4.1]{N98}\label{pro-2}
Assume $Y_t$ is irreducible on $\cX$. Then
\enb
\item[(i)] $Y_t$ is recurrent if and only if $h_j(i)=1$ for all $i\in\cX$ and some (and all) $j\in\cX$.
\item[(ii)] $Y_t$ is recurrent if and only if $h_A(i)=1$ for all $i\in\cX$ and some (and all) non-empty subset $A\subseteq\cX$.
\ene
\proe

\prb
Recall that by irreducibility, $Y_t$ is recurrent if and only if one (and every) state $i\in\cX$ is recurrent, which is equivalent to $h_i(i)=1$. Conclusion (i) is a direct result of \cite[Theorem~1.5.7,Theorem~3.4.1]{N98}.

To show (ii), by irreducibility, $\bP_i(\{Y_{\tau_A}=j\})>0$ for all $j\in A$ and $\tau_j=\tau_A$ conditional on $Y_{\tau_A}=j$. Hence, by the law of total probability,
\[\bP_i(\tau_A<\infty)=\sum_{j\in A}\bP_i(\{Y_{\tau_A}=j\})\bP(\tau_j<\infty),\] which implies that $h_A(i)=1$ if and only if $h_j(i)$ for all $j\in A$. On one hand, given any non-empty $A\subseteq\cX$, by (i), since $Y_t$ is recurrent, we have $h_j(i)=1$ for all $i\in\cX$ for all $j\in A$, and thus $h_A(i)=1$. On the other hand, if $h_A(i)=1$ for all $i\in\cX$ and some (and all) subsets $A\subseteq\cX$, then $h_j(i)=1$ for all $j\in A$, and by (i) we know $Y_t$ is recurrent.
\pre

\subsection{Criteria for existence and non-existence of moments of hitting times}

\prob{\rm\cite[Theorem\,1.5]{MP14}}\label{Spro-7}
Assume $Y_t$ is irreducible on $\cX$. Let $f\in\mathfrak{F}$ be such that $\lim_{x\to\infty}f(x)=+\infty$.
\begin{itemize}
\item[\rm(i)]   If there exist positive constants $c_1,\ c_2$ and $\sigma$, such that $f^{\sigma}\in\mathfrak{F}$ and
 $$Q f^{\sigma}(x)\le -c_2f^{\sigma-2}(x),\quad \forall x\in\lt\{f>c_1\rt\},$$
 then $\E_x\lt(\tau^{\epsilon}_{\{f\le c_1\}}\rt)<+\infty$ for all $0<\epsilon<\sigma/2$ and all $x\in\cX$.
\item[\rm(ii)]  Let $g\in\mathfrak{F}$. If there exist
\begin{itemize}
\item[\rm(ii-1)]   a constant $c_1>0$ such that $f\le c_1g$,
\item[\rm(ii-2)]   constants $c_2,\ c_3>0$ such that $Q g(x)\ge -c_3,\quad \forall x\in\lt\{g>c_2\rt\}$,
\item[\rm(ii-3)] constants $c_4>0$ and $\delta>1$ such that $g^{\delta}\in\mathfrak{F}$ and $Q g^{\delta}(x)\le c_4g^{\delta-1}(x),\quad \forall x\in\lt\{g>c_2\rt\}$, and
\item[\rm(ii-4)]  a constant $\sigma>0$ such that $f^{\sigma}\in\mathfrak{F}$ and $Q f^{\sigma}(x)\ge0,\quad \forall x\in\lt\{f>c_1c_2\rt\}$,
\end{itemize}
then $\E_x\lt(\tau^{\epsilon}_{\{f\le c_2\}}\rt)=+\infty$ for all $\epsilon>\sigma$ and all $x\in\lt\{f>c_2\rt\}$.
\end{itemize}
\proe

\subsection{Criteria for positive recurrence, ergodicity, and existence of QSDs}
For the reader's convenience, we first recall the classical Lyapunov-Foster criteria.
\prob{\rm\cite[Theorem\,1.7]{MP14}}\label{Spro-9}
Assume $Y_t$ is irreducible on $\cX$ and recurrent. Then the following are equivalent:
\begin{itemize}
\item[\rm(i)] $Y_t$ is positive recurrent.
\item[\rm(ii)] There exists a triple $(\epsilon,A,f)$, with $\epsilon>0$, $A$ a finite non-empty subset of $\cY$ and $f\in\mathfrak{F}$ verifying $Q f(x)\le -\epsilon$ for all $x\in\cX\setminus A$.
\end{itemize}
\proe
\prob{\rm\cite[Theorem\,7.1]{MT93}}\label{Spro-4}
Assume $Y_t$ is irreducible on $\cX$. Then $Y_t$ is positive recurrent and there exists an exponentially ergodic stationary distribution, if there exists a triple $(\epsilon,A,f)$ with $\epsilon>0$, $A$ a finite subset of $\cX$ and $f\in\mathfrak{F}$ with $\lim_{x\to\infty}f(x)=\infty$, verifying $Q f(x)\le-\epsilon f(x)$ for all $x\notin A$.
\proe
\prob\label{Spro-5}{\rm{\cite[Theorem\,1.1]{CV16},} \cite[Theorem\,5.1, Remark\,11]{CV17}, \cite[Theorem\,2.1]{HZZ19}}
Assume $\partial\neq\varnothing$ and the $Q$-process of $Y_t$ is irreducible. Then, there exists a finite subset $D\subseteq \cX\setminus \partial$, such that $\mathbb{P}_x(Y_1=y) > 0$ for all $x, y\in D$, such  that the constant
$$\psi_0:=\inf\bigl\{\psi\in\R: \liminf_{t\to\infty}e^{\psi t}\mathbb{P}_x(Y_t=x)>0\bigr\}$$
is finite and independent of $x\in D$. If in addition, there exists $\psi_1>\max\{\psi_0,\sup_{x\in \cX\setminus \partial}$ $\sum_{z\in\partial}q_{x,z}\}$, a function $f\in\mathfrak{F}$ such that $f\big|_{\cX\setminus \partial}\ge1$, $f\bigl|_\partial=0$, $\sup\nolimits_{\cX\setminus \partial}f<\infty$, and
\[\sum_{y\in (\cX\setminus \partial)\setminus\{x\}}q_{x,y}f(y)<\infty,\ \forall x\in \cX\setminus \partial;\quad Q f(x)\le-\psi_1f(x),\quad \forall x\in (\cX\setminus \partial)\setminus D,\]
 then there exists a unique {QSD} $\nu$ on $\cX\setminus \partial$ with positive constants $C$ and $\delta<1$, such that for all Borel probability measures $\mu$ on $\cX\setminus \partial$, \[\Bigl\|\mathbb{P}_{\mu}(Y_t\in\cdot|t<\tau_{\partial})-\nu\Bigr\|_{\sf TV}\le C\delta^t,\quad \forall t\ge0.\]
In addition, {${\rm d}\xi(x)=\zeta(x){\rm d}\nu(x)$} is the unique {quasi-ergodic distribution} for $Y_t$, as well as the unique stationary distribution of the $Q$-process, where $\zeta$ is the non-negative function
\[{\zeta}(x)=\lim_{t\to\infty}e^{\psi_0t}\bP_x\lt(t<\tau_{\partial}\rt),\ x\in\cX\setminus \partial.\]
\proe
To show the non-existence of QSDs, we rest on the following two classical results.
\prob\cite[Lemma~4.1]{CMM13}\label{pro-3}
Assume $\partial\neq\varnothing$ and the $Q$-process of $Y_t$ is irreducible. If there exists a QSD for $Y_t$ supported on $\partial^{\sc}$, then the uniform exponential moment property holds:
\[\text{there exists}\ \psi>0\ \text{such that}\ \ \E_x(\exp(\psi\tau_{\partial}))<\infty,\quad \forall x\in\cX.\]
\proe
\prob\cite[Theorem~3.3.3]{N98}\label{pro-4}
Let $A\subseteq\cX$ and $k_A(i)=\E_i(\tau_A)$ for all $i\in\cX$. Assume $q_x\neq0$ for all $x\in\cX\setminus A$. Then the vector of expected hitting times $(k_A(i))_{i\in\cX}$ is the minimal non-negative solution to the following linear equations:
\[\cab k_A(i)=1,\qquad\qquad\qquad\qquad\qquad\ \ {\text{if}}\ \ i\in A,\\ \sum_{j\in\cX\setminus\{i\}}q_{ij}(k_A(i)-k_A(j))=1,\quad {\text{if}}\ i\in\cX\setminus A.\cae\]
\proe

\subsection{Criterion for non-summability of functions with respect to stationary measures}
\begin{proposition}
  \label{prop-non-summability}\cite[Theorem~1', Remarks~3,4]{AI99}
Let $\cX$ be an unbounded countable subset of $\R_{\ge0}$ and $(\mathcal{Y},\mathcal{F},\bP)$ a probability space with a filtration $\{\mathcal{F}_n\}_{n\in\N_0}$. Assume that $Z_n$ is a discrete time $\mathcal{F}_n$-adapted irreducible aperiodic Markov chain on $\cX$, which is recurrent with   unique (up to a multiplicative constant) stationary measure $\nu$. Let $f$ be a non-negative function defined on $\cX$. Then,
 $$\sum_{x\in\cX}f(x)\nu(x)=\infty,$$ 
if there exists some finite set $A$, some $z\in A$, and some non-negative functions $g$ and $h$, such that
\begin{enumerate}
  \item[(i)] $\lim_{x\to\infty}h(x)=\infty$ and $\lim_{x\to\infty}\frac{g(x)}{h(x)}=\infty$,
  \item[(ii)] whenever $z'\in E\subset \cX\setminus A$, the process $\{h(Z_{n\wedge\tau_A})\}_{n\in\N_0}$ is a $\bP_{z'}$-submartingale,
  \item[(iii)] $\E_z(g(Z_n)\mathbbm{1}_{\tau_A>n})$ is finite for all $n\in\N$,
  \item[(iv)] $\E(g(Z_{n+1})-g(Z_n)|\mathcal{F}_n)\le f(Z_n)$, $\bP_z$-a.s., on $\tau_A>n$.
\end{enumerate}
\end{proposition}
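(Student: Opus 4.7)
The plan is to argue by contradiction using Kac's excursion formula to convert the statement about $\sum_x f(x) \nu(x)$ into one about an excursion expectation. By irreducibility, recurrence, and the uniqueness (up to a positive scalar) of the stationary measure, Kac's identity yields, for any $z_0 \in A$ and some $C > 0$,
\[ \sum_{x \in \cX} f(x) \nu(x) = C \, \E_{z_0}\!\Bigl[\sum_{n=0}^{\tau_A^+ - 1} f(Z_n)\Bigr], \]
where $\tau_A^+ = \inf\{n \ge 1 : Z_n \in A\}$. Thus it suffices to show this excursion expectation is infinite, and by the strong Markov property at time $1$ it suffices to exhibit some $z' \in \cX \setminus A$ with $\bP_{z_0}(Z_1 = z') > 0$ for which $\E_{z'}[\sum_{k=0}^{\tau_A - 1} f(Z_k)] = +\infty$.

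Suppose for contradiction that this excursion expectation is finite for every such $z'$. Condition (iv) says that $g(Z_{n \wedge \tau_A}) - \sum_{k=0}^{(n \wedge \tau_A) - 1} f(Z_k)$ has nonpositive conditional drift on $\{\tau_A > n\}$, hence (after a standard argument using that $\tau_A$ is the exit time from a set on which the drift inequality is valid) defines a $\bP_{z'}$-supermartingale. Combined with the integrability hypothesis (iii), optional stopping at $\tau_A \wedge n$ followed by monotone convergence gives
\[ \E_{z'}[g(Z_{n \wedge \tau_A})] \le g(z') + \E_{z'}\!\Bigl[\sum_{k=0}^{\tau_A - 1} f(Z_k)\Bigr] < +\infty, \]
so $(g(Z_{n \wedge \tau_A}))_n$ is uniformly bounded in $L^1(\bP_{z'})$.

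On the other hand, by condition (ii) the process $(h(Z_{n \wedge \tau_A}))_n$ is a non-negative $\bP_{z'}$-submartingale for $z' \in E$, whence $\E_{z'}[h(Z_{n \wedge \tau_A})] \ge h(z')$. Splitting this expectation on $\{\tau_A \le n\}$ and using that $\sup_{y \in A} h(y)<\infty$ (since $A$ is finite) yields
\[ \E_{z'}[h(Z_n); \tau_A > n] \ge h(z') - \sup_{y \in A} h(y) - o(1). \]
Picking $z'$ with $h(z')$ sufficiently large is possible since $h \to \infty$ at infinity and by irreducibility such a $z'$ is reached from $z_0$ in a finite number of steps, so that a shift of the excursion by the Markov property reduces to this case. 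Then the right-hand side is uniformly positive in $n$. Invoking condition (i), that $g/h \to \infty$, for any $K>0$ we have $g(x) \ge K h(x)$ once $h(x)$ exceeds some threshold $N_K$. Truncating $\E_{z'}[g(Z_n); \tau_A > n]$ according to $\{h(Z_n)\le N_K\}$ versus $\{h(Z_n)> N_K\}$ and inserting the submartingale lower bound forces this quantity to tend to $+\infty$ as $n, K \to \infty$, contradicting the $L^1$ bound from the previous paragraph.

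The main obstacle is the simultaneous management of the two stopped processes $(g(Z_{n \wedge \tau_A}))$ and $(h(Z_{n \wedge \tau_A}))$: neither need be uniformly integrable, the stopping time $\tau_A$ is unbounded, and the supermartingale and submartingale bounds have to be combined through a careful truncation at level sets of $h$ before being transported back via the Markov property to the Kac excursion from $z_0 \in A$. Condition (iii) is precisely the piece that validates the optional stopping required for the supermartingale coming from (iv), while the separation of growth rates in (i) is what supplies the quantitative slack needed to turn the submartingale lower bound on $h$ into a divergence for $g$.
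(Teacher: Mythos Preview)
The paper does not supply its own proof of this proposition: it is quoted as Theorem~1' (together with Remarks~3 and~4) of \cite{AI99} and used as a black-box tool in the null-recurrence cases of Theorem~\ref{th-9}(i). There is therefore no in-paper argument to compare your attempt against.

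That said, your sketch is a reasonable outline of how such a result is established, and is presumably close in spirit to the original argument in \cite{AI99}: Kac's excursion representation of $\nu$, the supermartingale coming from (iv) to control $\E[g(Z_{n\wedge\tau_A})]$ by the excursion sum of $f$, the submartingale from (ii) to keep $\E[h(Z_n);\tau_A>n]$ bounded below, and the separation of growth rates in (i) to manufacture the contradiction. A few points would need tightening in a full write-up. First, Kac's identity for a \emph{set} $A$ is a $\nu|_A$-weighted sum of excursion expectations over $z_0\in A$, not a single term, so the reduction goes through because $A$ is finite and each $\nu(z_0)>0$. Second, you need a state $z'\notin A$ with $h(z')>\sup_A h$ that is reached with positive probability during an excursion from the specific $z$ appearing in (iii)--(iv); this follows from irreducibility (take any path from $z$ to $z'$ and cut it at the last visit to $A$), but as written it is asserted rather than argued, and the transfer of the integrability in (iii) to that $z'$ via the Markov property deserves a line. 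Third, in the final truncation you should split on $\{Z_n\ge X_K\}$ for a threshold $X_K$ chosen so that $g\ge Kh$ on $[X_K,\infty)$, rather than on level sets of $h$; then the residual term is controlled by $\bigl(\max_{x<X_K}h(x)\bigr)\bP_{z'}(\tau_A>n)\to 0$, and the lower bound $K(h(z')-\sup_A h)$ emerges cleanly.
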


\subsection{Criterion for implosivity and non-implosivity}

\prob\cite[Proposition\,2.14]{MP14}\label{Spro-10}
Assume $Y_t$ is irreducible on $\cX$. If there exists a non-empty proper subset $B\subsetneq\cX$ such that $Y_t$ implodes towards $B$, then $Y_t$ is implosive.
\proe

\prob\cite[Theorem\,1.15, Proposition\,1.16]{MP14}\label{Spro-6}
Assume $Y_t$ is irreducible on $\cX$. \begin{itemize}
\item[\rm(i)] The following are equivalent:
\begin{itemize}
\item[\rm(i-1)]  There exists a triple $(\epsilon,F,f)$ with a positive constant $\epsilon$, a finite set $F$, and a function $f\in\mathfrak{F}$ such that $\sup_{x\in\cX}f(x)<+\infty$ and $Q f(x)\le-\epsilon$ whenever $x\in \cX\setminus F$.
\item[\rm(i-2)]  There exists $c>0$, and for every finite $A\subseteq\cX$, there exists a positive {constant $C_A$, such that $\E_x(\tau_A)\le C_A$} and $\E_x(\exp(c\tau_A))<\infty$ whenever $x\in\cX\setminus A$. In particular, $Y_t$ is implosive.
\end{itemize}
\item[\rm(ii)]  Let $f\in\mathfrak{F}$ be such that $\lim_{x\to\infty}f(x)=+\infty$ and assume there exist positive constants $a,\ c,\ \epsilon$ and $\delta>1$ such that $f^{\delta}\in\mathfrak{F}$. In addition, if
$$Q f(x)\ge-\epsilon,\quad Q f^{\delta}(x)\le cf^{\delta-1}(x),\quad \text{whenever}\ x\in \{f>a\},$$
then the chain does not implode towards $\{f\le a\}$.
\end{itemize}
\proe

\subsection{Asymptotic expansion of $Qf$ for Lyapunov   functions $f$ used in the proofs}\label{subsec-asymptotics}

We provide an asymptotic expansion of $Qf(x)$ for all large $x$, for various Lyapunov functions $f$. Let $\delta\in\R$.

\begin{enumerate}
\item[$\bullet$] Let $f(x)=x^{\delta}$. Then \[Qf(x)=\delta x^{\delta}\lt\{\alpha x^{R-1}+(\beta+\delta\vartheta)x^{R-2}+\rO\!\lt(x^{R-3}\rt)\rt\}.\]
\item[$\bullet$] Let $f(x)=(x(\log x)^{-1})^{\delta}$. Then \begin{multline*}
 Qf(x)=\delta (x(\log x)^{-1})^{\delta}\lt\{\alpha\lt(1-(\log x)^{-1}\rt)x^{R-1}\rt.\\
 \lt.+\lt((\beta+\delta\vartheta)-(\beta+2\delta\vartheta)(\log x)^{-1}\rt)x^{R-2}+\rO\!\lt(x^{R-2}(\log x)^{-2}\rt)\rt\}.\end{multline*}
\item[$\bullet$] Let  $f(x)=(x\log x)^{\delta}$. Then \begin{multline*}Qf(x)=\delta (x\log x)^{\delta}\lt\{\alpha\lt(1+(\log x)^{-1}\rt)x^{R-1}+(\beta+\delta\vartheta)x^{R-2}\rt.\\\lt.+(\gamma+\delta\vartheta)x^{R-2}(\log x)^{-1}
+\rO\!\lt(x^{R-2}(\log x)^{-2}\rt)\rt\}.\end{multline*}
\item[$\bullet$] Let  $f(x)=(\log x)^{\delta}$. Then \[Qf(x)=\delta (\log x)^{\delta-1}\lt\{\alpha x^{R-1}+\beta x^{R-2}+(\delta-1)\vartheta x^{R-2}(\log x)^{-1}+\rO\!\lt(x^{R-3}\rt)\rt\}.\]
\item[$\bullet$] Let  $f(x)=(\log\log x)^{\delta}$. Then \begin{multline*}Qf(x)=\delta (\log\log x)^{\delta-1}(\log x)^{-1}\cdot\\ \lt\{\alpha x^{R-1}+\beta x^{R-2}-\vartheta x^{R-2}(\log x)^{-1}+\rO\lt(x^{R-2}(\log x)^{-1}(\log\log x)^{-1}\rt)\rt\}.\end{multline*}
%\end{appendices}
\end{enumerate}

\bibliographystyle{plain}
\footnotesize\bibliography{references}

\end{document}